\newtheorem{prop}{Proposition}
\newtheorem{thm}{Theorem}
\newtheorem{lmm}{Lemma}[section]
\newtheorem*{cor}{Corollary}
\theoremstyle{definition}
\newtheorem{defi}{Definition}
\newtheorem{prob}{Problem}
\newtheorem*{rmk}{Remark}
\newenvironment{prf}{{\noindent\it Proof}\quad}{\hfill $\square$\par}
\title{
On Abel-Jacobi Maps of Lagrangian Families
}
\author{Chenyu Bai}
\date{}
\begin{document}
\setcounter{section}{-1}
\maketitle
\begin{abstract}
 We study in this article the cohomological properties of Lagrangian families on projective hyper-Kähler manifolds. First, we give a criterion for the vanishing of Abel-Jacobi maps of Lagrangian families. Using this criterion, we show that under a natural condition, if the variation of Hodge structures on the degree $1$ cohomomology of the fibers of the Lagrangian family is maximal, its Abel-Jacobi map is trivial. We also construct Lagrangian families on generalized Kummer varieties whose Abel-Jacobi map is not trivial, showing that our criterion is optimal. 
\end{abstract}

\section{Introduction}\label{SectionIntroduction}
Let $X$ be a projective hyper-Kähler manifold~\cite{Beauville}, that is, a simply connected complex projective manifold whose space of holomorphic $2$-forms is generated by a nowhere degenerate $2$-form $\sigma_X$. The dimension of $X$ is an even number $2n$. 
A Lagrangian subvariety $L$ of $X$ is a dimension $n$ irreducible possibly singular subvariety of $X$ such that, denoting $j: \Tilde{L}\to L\hookrightarrow X$ a desingularisation of $L$, $j^*\sigma_X=0$ in $H^0(\Tilde{L}, \Omega^2_{\Tilde{L}})$.

In this article, we will be considering Lagrangian families of a hyper-Kähler manifold $X$.
\begin{defi}
A Lagrangian family of a hyper-Kähler manifold $X$ is a diagram
\begin{equation}\label{LagrangianFamily}
    \begin{tikzcd}
    \mathcal L\arrow{r}{q}\arrow{d}{p} & X\\
    B &
    \end{tikzcd}
\end{equation}
where $p$ is flat and projective, $\mathcal L$ and $B$ are connected quasi-projective manifolds and $q$ maps birationally the general fiber $L_b:=p^{-1}(b)$, $b\in B$, to a Lagrangian subvariety of $X$. In what follows, we will denote $j_b$ the composition $L_b\hookrightarrow\mathcal L\to X.$
\end{defi}
Lagrangian families were studied by Voisin in \cite{VoisinTriangle, VoisinLefschetz} in different contexts, as generalizations of Lagrangian fibrations~\cite{Matsushita99}. While Lagrangian fibrations do not exist in general on projective hyper-Kähler manifolds (as this forces the Picard number to be at least $2$), Lagrangian families (and even Lagrangian coverings for which $q$ is dominant) are relatively common. See, for example, constructions in~\cite{IlievManivel, VoisinLag} on Lagrangian families (coverings) on Fano varieties of lines of cubic fourfolds and on Hilbert schemes of K3 surfaces. Works of Voisin~\cite{VoisinTriangle, VoisinLefschetz} indicate that the existence of Lagrangian coverings implies important properties of the hyper-Kähler manifold in question. For example, it is shown in~\cite{VoisinLefschetz} that a very general projective hyper-Kähler fourfold admitting a Lagrangian covering satisfies Lefschetz standard conjecture for degree $2$ cohomology. Studies of examples of Lagrangian families lead us to consider the following problem, which is motivated in the article~\cite{VoisinCoisotrope}.

\begin{prob}\label{Problem1}
Consider a Lagrangian family of a hyper-Kähler manifold $X$ of dimension $2n$ given by a diagram as in  (\ref{LagrangianFamily}). What can be said of the map 
\begin{equation}\label{LagrangianChowMap}
    \begin{array}{cccc}
        \psi_{\mathcal L}: & B & \rightarrow & CH^n(X)  \\
        & b & \mapsto & q_*(L_b)
    \end{array}?
\end{equation}

\end{prob}
Let us first explain the relation between this problem and article~\cite{VoisinCoisotrope}. In~\cite{VoisinCoisotrope}, it is conjectured and proved for some cases that if two \textit{constant cycle} Lagrangian subvarieties (see~\cite{ConstantCycle, VoisinCoisotrope} for definitions) of $X$ have the same cohomological cycle class, then they are rationally equivalent. Note the following
\begin{lmm}
Let $X$ be a hyper-Kähler manifold. Then constant cycle Lagrangian subvarieties of $X$ are rigid as constant cycle subvarieties.
\end{lmm}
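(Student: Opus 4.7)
The plan is to argue infinitesimally. After passing to a desingularisation $\tilde L \to L$, I may assume $L$ is smooth without loss of generality, since deformations of $L$ as a constant-cycle subvariety of $X$ lift to deformations of $\tilde L$. First-order deformations of $L \hookrightarrow X$ are then classified by $H^0(L, N_{L/X})$. The Lagrangian condition furnishes the canonical isomorphism $N_{L/X} \cong \Omega^1_L$ given by $v \mapsto \iota_v \sigma_X |_{TL}$, so any first-order deformation is identified with a holomorphic 1-form $\alpha \in H^0(L, \Omega^1_L)$.

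The key step is to interpret the requirement that the first-order family $L_\epsilon \subset X \otimes k[\epsilon]/(\epsilon^2)$ remain a constant-cycle subvariety as a vanishing condition on $\alpha$. For any two points $x, y \in L$, their deformed counterparts $x_\epsilon, y_\epsilon \in L_\epsilon$ must satisfy $[x_\epsilon] = [y_\epsilon]$ in $CH_0(X \otimes k[\epsilon])$. Since $[x] = [y]$ already holds in $CH_0(X)$ by constant-cycleness of $L$, this reduces to a linear vanishing in $\alpha$, which I would formulate via the infinitesimal invariant attached to constant-cycle subvarieties in the framework of \cite{VoisinCoisotrope}. Combining this vanishing with the non-degeneracy of $\sigma_X$ underlying the isomorphism $N_{L/X} \cong \Omega^1_L$, together with the fact that $L$ is \emph{already} constant cycle (so there is no residual ``direction'' in which the Chow class can be trivially corrected), I expect to force $\alpha = 0$.

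The main obstacle is making the infinitesimal constant-cycle criterion in the second step rigorous and tractable, since $CH_0(X)$ is non-representable and its formal tangent space is subtle. A complementary, more global approach to handle the same issue is the following: a hypothetical non-trivial family $\mathcal L \to B$ of constant-cycle subvariety deformations of $L$ would sweep out an image $Z = q(\mathcal L) \subset X$ of dimension strictly greater than $n$. If the cycle-class map $b \mapsto c_b := [q(x)]$ (any $x \in L_b$) is constant, then $Z$ is itself a constant-cycle subvariety of dimension $>n$, contradicting the standard dimension bound on constant-cycle subvarieties of hyper-Kähler manifolds; otherwise $b \mapsto c_b$ is non-constant, in which case one excludes the family by the infinitesimal analysis above. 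Either route ultimately reduces the rigidity statement to a careful interaction between the symplectic identification of the normal bundle and Voisin's infinitesimal description of constant-cycle families.
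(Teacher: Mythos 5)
There is a genuine gap. Your first (infinitesimal) route is not a proof: the step where the constant-cycle condition on a first-order deformation is supposed to force $\alpha=0$ is exactly the point you leave open, and it is not clear it can be closed at all --- the ``tangent space'' to the locus of constant-cycle deformations inside $H^0(L,\Omega^1_L)$ is governed by the formal structure of $CH_0(X)$, which is not representable, and Mumford/Roitman-type statements about zero-cycles on symplectic varieties are not infinitesimal in nature, so there is no evident linear condition on $\alpha$ to exploit. Moreover, even granting a first-order vanishing, rigidity of a locus that is not a scheme in any natural way does not follow from a tangent-space computation. Your second (global) route is much closer to a proof, but it has the same hole: the case where $b\mapsto c_b$ is non-constant is ``excluded by the infinitesimal analysis above'', i.e.\ by the argument that was already incomplete.

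The fix is to drop the case distinction on $c_b$ altogether. Whatever $c_b$ does, every point of the swept-out locus $Z=q(\mathcal L)$ lies on some $n$-dimensional constant-cycle subvariety $L_b$, hence its rational equivalence orbit has dimension $\geq n$; therefore $Z$ is contained in $S_nX:=\{x\in X:\ \textrm{the rational equivalence orbit of }x\textrm{ has dimension}\geq n\}$. By \cite[Theorem 1.3]{VoisinCoisotrope}, $S_nX$ is a \emph{countable} union of closed subvarieties of dimension $\leq n$, so it cannot contain $Z$, which has dimension $>n$ once the family is non-trivial. This is essentially what the paper does, phrased slightly differently: constant-cycle Lagrangian subvarieties are precisely the $n$-dimensional irreducible components of $S_nX$, of which there are only countably many, so a connected family of them must be constant. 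Your case of constant $c_b$ (where $Z$ itself is constant cycle of dimension $>n$, contradicting isotropy) is correct but is subsumed by this countability argument, and no infinitesimal input is needed anywhere.
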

\begin{prf}
Following the notations in~\cite{VoisinCoisotrope}, let 
\[S_nX:=\{x\in X: \textrm{ the rational equivalence orbit of } x \textrm{ has dimension} \geq n\}.
\]
As is shown in~\cite[Theorem 1.3]{VoisinCoisotrope}, $S_nX$ is a countable union of irreducible varieties of dimension $\leq n$ and constant cycle Lagrangian subvarieties of $X$ are exactly irreducible components of $S_nX$ of dimension $n$. Hence, constant cycle Lagrangian subvarieties of $X$ are rigid.
\end{prf}
One may now wonder if the condition \emph{constant cycle} can be dropped in the above conjecture. If a hyper-Kähler manifold $X$ has the property that the map $\Psi_{\mathcal L}$ of (\ref{LagrangianChowMap}) is constant for any Lagrangian family,  there are only \emph{finitely many} Chow classes for Lagrangian subvarieties $L$ with a given cohomological cycle class $[L]\in H^{2n}(X,\mathbb Z)$. This is because the Hilbert scheme parametrizing subvarieties with a given cohomological class has only finitely many connected components. Conversely, if $X$ has Lagrangian families with nonconstant map $\Psi_{\mathcal L}$, then the naïve generalization of the above conjecture by dropping \emph{constant cycle} is false.

Many of the known examples give rise to Lagrangian families with constant $\Psi_{\mathcal L}$. The typical ones are $K3$ surfaces and Lagrangian fibrations. Indeed, for a $K3$ surface $S$, the class map $CH^1(S)\to H^2(S,\mathbb Z)$ is injective, so that the fibers of a family, being cohomologically equivalent, are rationally equivalent. For a Lagrangian fibration, the base being rationally connected~\cite{MatsushitaBase, Lin}, the fibers are rationally equivalent. Lagrangian families constructed in~\cite{IlievManivel, VoisinLag} also give a positive answer to this problem since the bases are open subsets of projective spaces.



A weaker invariant of algebraic cycles in a projective manifold is the Abel-Jacobi invariant~\cite[Chapter 12]{Voisin}. If two cycles holomogous to $0$ are rationally equivalent, then they have the same Abel-Jacobi invariant in the intermediate jacobians. Problem~\ref{Problem1} thus motivates the following question. Here we denote $\Phi_X^n: CH^n(X)_{hom}\to J^{2n-1}(X)$ the Abel-Jacobi map.
\begin{prob}\label{Problem2}
Consider a Lagrangian family of a hyper-Kähler manifold $X$ of dimension $2n$ given by a diagram as in  (\ref{LagrangianFamily}). Let $0\in B$ be a point. Under which conditions is the Abel-Jacobi map
\begin{equation}\label{AbelJacobiMap}
    \begin{array}{cccc}
       \Psi_{\mathcal L}^{AJ}: & B  & \rightarrow & J^{2n-1}(X) \\
       & b & \mapsto & \Phi_X^n(q_*(L_b-L_0)) 
    \end{array}
\end{equation}
trivial?
\end{prob}

Note that $\Psi_{\mathcal L}^{AJ}(b)=\Phi_{X}^n\circ\Psi_{\mathcal L}(b-0)$, Problem~\ref{Problem2} can be viewed as a first step to study Problem~\ref{Problem1}.

~\newline

In this article, we first give a criterion for the vanishing of the Abel-Jacobi map (\ref{AbelJacobiMap}) for Lagrangian families of a hyper-Kähler manifold (see also Proposition~\ref{Criterion}).
\begin{prop}\label{CriterionIntroduction}
Consider a Lagrangian family of a hyper-Kähler manifold $X$ of dimension $2n$ as in (\ref{LagrangianFamily}), satisfying the following condition  :

\begin{quote}
   $\clubsuit$ For general $b\in B$, the contraction by $q^*\sigma_X$ gives an isomorphism $\lrcorner q^*\sigma_X: T_{B,b}\stackrel{\cong}{\to} H^0(L_b,\Omega_{L_b}).$ 
\end{quote}
 Then the Abel-Jacobi map (\ref{AbelJacobiMap}) is trivial if and only if for general $b\in B$, the restriction map 
\[j_b^*: H^{2n-1}(X,\mathbb Q)\to H^{2n-1}(L_b,\mathbb Q)
\]
is zero.
\end{prop}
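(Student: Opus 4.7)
The plan is to reduce the statement to a computation of $d\Psi_{\mathcal L}^{AJ}|_b$ at a general $b \in B$ and then exploit the standard infinitesimal formula for the differential of an Abel--Jacobi map of a family of cycles, in conjunction with the Lagrangian condition and $\clubsuit$. Since $B$ is connected and $\Psi_{\mathcal L}^{AJ}$ is a holomorphic map to the complex torus $J^{2n-1}(X)$, its triviality is equivalent to the vanishing of $d\Psi_{\mathcal L}^{AJ}|_b$ at a general point. By Griffiths transversality applied to a family of codimension-$n$ cycles, this differential takes values in the top graded piece $H^{n-1,n}(X) \subset H^{2n-1}(X,\mathbb C)/F^n$, and via Poincar\'e duality on $X$ its vanishing is tested by the pairings $\langle d\Psi_{\mathcal L}^{AJ}|_b(u),\alpha\rangle$ for $u \in T_{B,b}$ and $\alpha \in H^{n+1,n}(X)$.

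I would then invoke the standard formula for the differential of the Abel--Jacobi map of a family of cycles: for any lift $\tilde u$ of $u$ to a holomorphic vector field on $\mathcal L$ near $L_b$,
\[
\langle d\Psi_{\mathcal L}^{AJ}|_b(u),\alpha\rangle \;=\; \int_{L_b} \iota_{\tilde u}(q^*\alpha)\big|_{L_b},
\]
which is well-defined independently of the lift because $j_b^*\alpha = 0$ for dimensional reasons ($\alpha$ has type $(n+1,n)$ and $L_b$ has complex dimension $n$). To relate this to $j_b^*$, I would use that $\sigma_X$ is pointwise a non-degenerate symplectic form: the linear symplectic Lefschetz theorem gives a vector bundle isomorphism $\sigma_X\wedge:\Omega_X^{n-1}\xrightarrow{\sim}\Omega_X^{n+1}$, hence a cohomological isomorphism $\sigma_X\cup: H^{n-1,n}(X)\xrightarrow{\sim} H^{n+1,n}(X)$. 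Writing $\alpha = \sigma_X \wedge \beta$ with $\beta \in H^{n-1,n}(X)$ unique, expanding $q^*\alpha = q^*\sigma_X \wedge q^*\beta$ by Leibniz, and restricting to $L_b$ where $q^*\sigma_X|_{L_b}=0$ by the Lagrangian condition, one obtains
\[
\langle d\Psi_{\mathcal L}^{AJ}|_b(u),\alpha\rangle \;=\; \int_{L_b} \eta_u \wedge j_b^*\beta, \qquad \eta_u := \iota_{\tilde u}(q^*\sigma_X)\big|_{L_b},
\]
where $\eta_u\in H^0(L_b,\Omega_{L_b})$ is precisely the image of $u$ under the isomorphism of $\clubsuit$.

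To conclude, Serre duality on $L_b$ makes the pairing $H^0(L_b,\Omega_{L_b})\otimes H^{n-1,n}(L_b)\to\mathbb C$ perfect; combined with $\clubsuit$ (so that $\eta_u$ ranges over $H^0(L_b,\Omega_{L_b})$) and the Lefschetz isomorphism (so that $\beta$ ranges over $H^{n-1,n}(X)$), the vanishing of $d\Psi_{\mathcal L}^{AJ}|_b$ is equivalent to the vanishing of $j_b^*:H^{n-1,n}(X)\to H^{n-1,n}(L_b)$. Finally, since $H^{p,q}(L_b)=0$ unless $p,q\leq n$ and $j_b^*$ respects the Hodge decomposition, its only possibly nonzero Hodge components on $H^{2n-1}(X,\mathbb C)$ are of type $(n-1,n)$ and its complex conjugate $(n,n-1)$; hence the vanishing over $\mathbb Q$ of $j_b^*|_{H^{2n-1}(X,\mathbb Q)}$ reduces to the vanishing of its $(n-1,n)$-component, giving the claimed equivalence.

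The main obstacle I anticipate is the careful derivation of the infinitesimal formula for $d\Psi_{\mathcal L}^{AJ}|_b$ in the present setting, where $q|_{L_b}$ is only assumed birational onto its image and $L_b$ may be singular for special $b$: one would restrict to a general $b$ where $L_b$ is smooth and $q|_{L_b}$ is an immersion, apply the formula there, and use the holomorphy of $\Psi_{\mathcal L}^{AJ}$ on the connected base $B$ to conclude globally.
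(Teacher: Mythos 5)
Your argument is correct and is essentially the paper's proof: both reduce to the differential of the Abel--Jacobi map at a general point, use the bundle isomorphism $\wedge\sigma_X:\Omega_X^{n-1}\to\Omega_X^{n+1}$ to rewrite the test class, kill the extra term in the Leibniz expansion via $j_b^*\sigma_X=0$ so that the differential factors as the contraction $\lrcorner q^*\sigma_X$ followed by the Gysin map $j_{b*}$ (Serre dual to $j_b^*$ on $H^{n-1,n}$), and finish with Serre duality, condition $\clubsuit$, and Hodge symmetry. The paper packages the same computation as a commutative ladder coming from the Leray filtration on $\Omega^\bullet_{\mathcal L}|_{L_b}$ rather than as an explicit integral formula, but the content is identical.
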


The condition $\clubsuit$ is natural. According to~\cite{VoisinLag}, the deformations of a \emph{smooth} Lagrangian subvariety are non-obstructed, and a local deformation is still Lagrangian. Therefore, if we take $(B,b)$ to be a germ of the Hilbert scheme of deformations of a \emph{smooth} Lagrangian subvariety $L\subset X$, and $\mathcal L\to B$ the corresponding family, then condition $\clubsuit$ holds since $T_{B,b}\cong H^0(L_b, N_{L_b/X})$ by unobstructedness and $\lrcorner\sigma_X: H^0(L_b,N_{L_b/X})\to H^0(L_b,\Omega_{L_b})$ is an isomorphism for a smooth Lagrangian variety.  

Using this criterion, we give a response to Problem~\ref{Problem2}.
\begin{thm}\label{MainTheorem}
 Consider a Lagrangian family of a hyper-Kähler manifold $X$ of dimension $2n$ given by a diagram as in (\ref{LagrangianFamily}), satisfying condition $\clubsuit$. Assume that the variation of Hodge structures on the degree $1$ cohomology of the fibers of $p:\mathcal L\to B$ is maximal, i.e., the local period map
\begin{equation}\label{PeriodMap}
    \begin{array}{cccc}
         \mathcal P: & B &\to & Gr(h^{1,0}(L), H^2(L,\mathbb C))\\
          & b & \mapsto & H^{1,0}(L_b)\subset H^1(L_b,\mathbb C)\cong H^1(L,
          \mathbb C),
    \end{array}
\end{equation}
where $L$ is a general fiber of $p: \mathcal L\to B$,
is generically a local immersion. Then the Abel-Jacobi map (\ref{AbelJacobiMap}) is trivial.
\end{thm}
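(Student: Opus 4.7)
The plan is to apply the criterion of Proposition~\ref{CriterionIntroduction}, which reduces the statement to showing that for general $b \in B$ the restriction map $j_b^* : H^{2n-1}(X, \mathbb{Q}) \to H^{2n-1}(L_b, \mathbb{Q})$ vanishes. Fix $\alpha \in H^{2n-1}(X, \mathbb{Q})$ and set $s(b) := j_b^* \alpha$. Since $s$ is the fibrewise restriction of the global class $q^*\alpha \in H^{2n-1}(\mathcal{L}, \mathbb{Q})$, it is a flat section of the local system $R^{2n-1} p_* \mathbb{Q}$ on the smooth locus of $p$. Via Poincaré duality on the smooth fibres $L_b$, this corresponds (up to Tate twist) to a flat section $\psi$ of $(R^1 p_* \mathbb{Q})^*$; I aim to show $\psi = 0$.

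Suppose for contradiction $\psi \neq 0$. By Deligne's theorem of the fixed part combined with the semi-simplicity of polarised variations of Hodge structure, the existence of a nonzero flat section of $(R^1 p_* \mathbb{Q})^*$ is equivalent to the existence of a nonzero constant sub-variation of Hodge structure $W \subset R^1 p_* \mathbb{Q}$; its $(1,0)$-piece $H^{1,0}(W) \subset H^{1,0}(L_b)$ is a nonzero constant subspace (nonzero since $W$ has weight one). Any $\omega_0 \in H^{1,0}(W)$ extends to a flat section of $\mathcal{H}^{1,0}$ that stays in $H^{1,0}(L_b)$ for every $b$, so the derivative of the period map satisfies $d\mathcal{P}_b(v)(\omega_0) = 0$ for every $v \in T_{B, b}$. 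Writing $\tau : T_{B, b} \stackrel{\sim}{\to} H^{1,0}(L_b)$ for the contraction isomorphism of condition $\clubsuit$ and setting
\[
\mu_b : H^{1,0}(L_b) \otimes H^{1,0}(L_b) \to H^{0,1}(L_b), \qquad \mu_b(\tau(v), \omega) := d\mathcal{P}_b(v)(\omega),
\]
we obtain $\mu_b(H^{1,0}(L_b), H^{1,0}(W)) = 0$.

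The crucial input is that $\mu_b$ is symmetric. By Griffiths' formula, $\mu_b(\tau(v), \omega) = \mathrm{KS}(v) \cup \omega$ with $\mathrm{KS}(v) \in H^1(L_b, T_{L_b})$; condition $\clubsuit$ and the Lagrangian structure identify $\mathrm{KS}(v)$ with the image of $\tau(v) \in H^0(\Omega^1_{L_b})$ under the connecting map of the short exact sequence
\[
0 \to T_{L_b} \to T_X|_{L_b} \to \Omega^1_{L_b} \to 0,
\]
where $\sigma_X$ identifies the rightmost term with $N_{L_b/X}$. The Lagrangian condition forces the extension class of this sequence to lie in the symmetric summand $H^1(L_b, S^2 T_{L_b}) \subset H^1(L_b, T_{L_b} \otimes T_{L_b})$, which yields the symmetry of $\mu_b$. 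Given symmetry, $\mu_b(H^{1,0}(W), H^{1,0}(L_b)) = 0$ as well, meaning that $d\mathcal{P}_b(\tau^{-1}(\omega_0)) = 0$ for every $\omega_0 \in H^{1,0}(W)$. The maximality hypothesis (injectivity of $d\mathcal{P}_b$ at general $b$) then forces $\omega_0 = 0$, so $H^{1,0}(W) = 0$ and hence $W = 0$, contradicting $W \neq 0$.

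The principal difficulty is establishing the symmetry of $\mu_b$; the remainder of the argument is a routine combination of Deligne's semi-simplicity, Poincaré duality, and the standard Griffiths description of the derivative of the period map. The Lagrangian hypothesis enters precisely at this symmetry step, via the closedness and non-degeneracy of $\sigma_X$ and the resulting symmetry of the extension class of the normal bundle sequence.
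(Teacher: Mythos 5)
Your overall architecture is exactly that of the paper's proof: reduce via Proposition~\ref{Criterion}(b) to the vanishing of $j_b^*$ on $H^{2n-1}$, produce from a hypothetical nonvanishing a nonzero \emph{constant} sub-variation of Hodge structure of $R^1p_*\mathbb Q$ with nonzero $(1,0)$-part (the paper takes $I=\mathrm{Im}\,j^*$ directly, using a relative polarization to identify $R^{2n-1}p_*\mathbb Q$ with $R^1p_*\mathbb Q$; your detour through flat sections of the dual and semisimplicity lands in the same place), and then play the symmetry of the bilinear form $\mu_b(\tau(u),\tau(v))=\bar\nabla_u(v\lrcorner q^*\sigma_X)$ against the injectivity of $d\mathcal P_b$ to force the $(1,0)$-part to vanish. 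This is the paper's Proposition~\ref{SymmetryProp} plus the final paragraph of its proof of the theorem, and that part of your argument is correct.

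The gap is at the one step you yourself flag as the principal difficulty: the symmetry of $\mu_b$ is asserted, not proved, and the justification you sketch does not apply in the stated generality. You invoke the exact sequence $0\to T_{L_b}\to T_X|_{L_b}\to \Omega^1_{L_b}\to 0$ and claim its extension class lies in $H^1(L_b,S^2T_{L_b})$. This presupposes that $j_b:L_b\to X$ is an immersion with locally free normal bundle identified with $\Omega^1_{L_b}$ via $\sigma_X$; but the definition of a Lagrangian family only requires $j_b$ to map $L_b$ \emph{birationally} onto a possibly singular Lagrangian subvariety, so $T_{L_b}\to j_b^*T_X$ need not be a subbundle inclusion and the sequence you use need not exist. (The symmetry of the extension class for genuinely smooth embedded Lagrangians is a true, classical fact --- essentially the Donagi--Markman weak cubic condition --- but even there you would need to supply the argument that closedness of $\sigma_X$ forces it.) The paper circumvents both issues by working with $\Omega_{\mathcal L|L_b}$ and its Leray filtration, resp.\ by transporting everything to the relative Albanese variety $\mathcal A_0$ equipped with the closed $2$-form $\sigma_{\mathcal A_0}=[Z_0]^*q_0^*\sigma_X$, where the commutative ladder (\ref{MainCommDiag}) yields the symmetry of $S$ without ever assuming $j_b$ is an embedding. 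To complete your proof you would need either to restrict to families of smooth Lagrangian submanifolds and prove the $S^2$-claim, or to substitute an argument of the paper's type that only uses the total space $\mathcal L$ over $B$.
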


 In the opposite direction, we construct in Section~\ref{Examples} Lagrangian families satisfying $\clubsuit$ for which the Abel-Jacobi map is shown to be nontrivial using Proposition~\ref{CriterionIntroduction}. The variation of weight $1$ Hodge structures of the constructed Lagrangian families is not maximal: the general fibers of the period map of weight $1$ Hodge structures are of dimension $2$. This shows that the maximality of the variation of weight $1$ Hodge structures is a condition that cannot be dropped.

In Section~\ref{SectionMaxVar}, we shall explore under which conditions the variation of weight $1$ Hodge structures is maximal. Let $H^2(X,\mathbb Q)_{tr}$ be the orthogonal complement of $NS(X)_{\mathbb Q}$ in $H^2(X,\mathbb Q)$ with respect to the Beauville-Bogomolov-Fujiki form $q$ of $X$ (see~\cite{Beauville}) and let $b_2(X)_{tr}$ be the dimension of $H^2(X,\mathbb Q)_{tr}$ .We prove the following result (see also Proposition~\ref{PropMaxVar}):
\begin{prop}\label{PropMaxVarIntroduction}
 Consider a Lagrangian family of a hyper-Kähler manifold $X$ of dimension $2n$ given by a diagram as in (\ref{LagrangianFamily}), satisfying condition $\clubsuit$. Assume that the Mumford-Tate group of the Hodge structure $H^2(X,\mathbb Q)$ is maximal, i.e. it is the special orthogonal group of $(H^2(X,\mathbb Q)_{tr},q)$, and assume that $b_2(X)_{tr}\geq 5$.  If $h^{1,0}(L_b)$ is smaller than $2^{\lfloor\frac{b_2(X)_{tr}-3}{2}\rfloor}$, then the variation of weight $1$ Hodge structures of $p$ is maximal. 
\end{prop}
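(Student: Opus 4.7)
The plan is to prove the contrapositive: assuming the variation of weight-$1$ Hodge structures of $p \colon \mathcal L \to B$ is not maximal, I will derive $h^{1,0}(L_b) \geq 2^{\lfloor (b_2(X)_{tr}-3)/2 \rfloor}$, contradicting the hypothesis.

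First, I would reformulate non-maximality infinitesimally. At a general $b \in B$, the differential $d\mathcal P_b \colon T_{B,b} \to \operatorname{Hom}(H^{1,0}(L_b), H^{0,1}(L_b))$ is, by Griffiths, cup product with the Kodaira-Spencer class $\kappa(\xi) \in H^1(L_b, T_{L_b})$. Using $\clubsuit$ to identify $T_{B,b}$ with $H^{1,0}(L_b)$ via $\lrcorner q^*\sigma_X$, the differential becomes a bilinear pairing $\mu_b \colon H^{1,0}(L_b)^{\otimes 2} \to H^{0,1}(L_b)$, and non-maximality means $\mu_b$ is degenerate in the first variable. The kernel distribution integrates to a holomorphic foliation on a Zariski open of $B$ along whose leaves the weight-$1$ Hodge structure is constant.

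Next, I would use the Lagrangian structure to bridge this VHS to $H^2(X,\mathbb Q)_{tr}$. The goal is to produce, from the non-maximality, a nonzero morphism of polarized Hodge structures $V \hookrightarrow H^1(L_b, \mathbb Q)$, where $V$ is an irreducible weight-$1$ sub-Hodge structure of some tensor construction of $H^2(X,\mathbb Q)_{tr}$. The morphism should be built using $q^*\sigma_X$ itself (which already provides $\clubsuit$) together with its cup-product action on $H^*(\mathcal L, \mathbb Q)$, the foliation from the first step supplying the ``constant direction'' to be identified with the image of $V$. This is the crux of the argument and the main obstacle, and it is the only step where the Lagrangian geometry enters essentially.

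Once $V$ is in hand, the representation-theoretic step is standard. The maximality of the Mumford-Tate group of $H^2(X,\mathbb Q)$---namely $SO(H^2(X,\mathbb Q)_{tr}, q)$---forces every Hodge substructure of a tensor construction from $H^2(X,\mathbb Q)_{tr}$ to be a subrepresentation of $SO(b_2(X)_{tr})$. For $N = b_2(X)_{tr} \geq 5$, the smallest irreducible representation of $\mathrm{Spin}(N)$ admitting a polarized weight-$1$ Hodge structure is a (half-)spin representation, of dimension $2 \cdot 2^{\lfloor (N-3)/2 \rfloor}$ with Hodge-theoretic $h^{1,0} = 2^{\lfloor (N-3)/2 \rfloor}$ (realized by the Kuga-Satake abelian variety). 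Hence $h^{1,0}(V) \geq 2^{\lfloor (b_2(X)_{tr}-3)/2 \rfloor}$, and the injection $V \hookrightarrow H^1(L_b, \mathbb Q)$ yields the desired lower bound on $h^{1,0}(L_b)$, completing the contrapositive.
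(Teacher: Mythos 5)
Your overall skeleton (contrapositive, reduce to a Hodge-theoretic dimension bound, invoke the spin/Kuga--Satake bound for a maximal Mumford--Tate group) matches the paper's strategy, but the proposal has a genuine gap exactly where you flag ``the crux of the argument'': the bridge from non-maximality of the weight-$1$ VHS to a Hodge-theoretic embedding involving $H^2(X,\mathbb Q)_{tr}$ is not constructed, only described as something that ``should be built'' from $q^*\sigma_X$ and a foliation. The paper's actual construction is quite specific and does not stay on $\mathcal L$: it passes to the relative Albanese fibration $\pi\colon\mathcal A\to B$, uses the cycle $Z_0$ of Lemma~\ref{MumfordConstruction} to transport $\sigma_X$ to a closed $2$-form $\sigma_{\mathcal A_0}=[Z_0]^*q_0^*\sigma_X$ which is \emph{non-degenerate} under $\clubsuit$, observes that along a positive-dimensional fiber $F_b$ of the period map the family $\mathcal A_{U_b}$ is isotrivial (hence, after a finite cover, a product $U_b\times A_b$), and then extracts the K\"unneth component of $[Z]^*\sigma_X$ in $H^1(F_b)\otimes H^1(A_b)$. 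Injectivity of the resulting map $\alpha\colon H^2(X)_{tr}\to H^1(F_b)\otimes H^1(A_b)$ is proved by simplicity of $H^2(X)_{tr}$ plus a rank count on $\sigma_{\mathcal A}$ restricted to $\mathcal A_{U_b}$ that uses non-degeneracy and $\dim F_b\geq 1$. None of this is recoverable from the data you list, and it is the only place where the Lagrangian geometry and condition $\clubsuit$ actually enter.

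There is also a structural error in the target you set yourself. You want a weight-$1$ Hodge structure $V$ that is an irreducible \emph{sub-Hodge structure of a tensor construction of} $H^2(X,\mathbb Q)_{tr}$, injecting into $H^1(L_b,\mathbb Q)$. This cannot work as stated: tensor constructions of a weight-$2$ structure have even weight, so they contain no weight-$1$ substructures; relatedly, when the Mumford--Tate group is the full $SO(H^2_{tr},q)$, the (half-)spin representations of $\mathrm{Spin}(b_2(X)_{tr})$ do \emph{not} occur inside tensor powers of the standard representation of $SO$. The correct statement, which the paper quotes from van Geemen--Voisin (Theorem~\ref{universalKS}), goes the other way: one produces an injection of bidegree $(-1,-1)$ of $H^2(X,\mathbb Q)_{tr}$ into $Hom(H,A)$ for weight-$1$ structures $H,A$ (here $H^1(F_b)^*$ and $H^1(A_b)$), and concludes that $H$ contains the Kuga--Satake structure as a summand, whence $\dim A_b\geq \tfrac12\cdot 2^{\lfloor(b_2(X)_{tr}-1)/2\rfloor}=2^{\lfloor(b_2(X)_{tr}-3)/2\rfloor}$. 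Your final numerology agrees with the paper's, but the representation-theoretic step must be routed through this $Hom$-embedding rather than through sub-Hodge structures of tensor constructions of $H^2_{tr}$.
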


\begin{cor}
Under the same assumptions as in Proposition~\ref{PropMaxVarIntroduction}, the Abel-Jacobi map (\ref{AbelJacobiMap}) is trivial.
\end{cor}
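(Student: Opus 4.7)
The plan is to deduce this corollary as a direct concatenation of Proposition~\ref{PropMaxVarIntroduction} with Theorem~\ref{MainTheorem}; no new idea is required beyond verifying that the hypotheses feed into each other correctly.

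First I would check that the hypotheses of the corollary are literally the hypotheses of Proposition~\ref{PropMaxVarIntroduction}: the family satisfies $\clubsuit$, the Mumford-Tate group of $H^2(X,\mathbb Q)$ equals the special orthogonal group of $(H^2(X,\mathbb Q)_{tr},q)$, $b_2(X)_{tr}\geq 5$, and $h^{1,0}(L_b) < 2^{\lfloor (b_2(X)_{tr}-3)/2 \rfloor}$. Proposition~\ref{PropMaxVarIntroduction} then immediately tells me that the variation of weight $1$ Hodge structures of $p:\mathcal L\to B$ is maximal, i.e.\ the local period map $\mathcal P$ of~(\ref{PeriodMap}) is generically a local immersion.

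With this established, I would observe that all hypotheses of Theorem~\ref{MainTheorem} now hold: condition $\clubsuit$ is inherited from the corollary's assumption, and the maximality of the period map has just been obtained. Invoking Theorem~\ref{MainTheorem} yields the triviality of the Abel-Jacobi map~(\ref{AbelJacobiMap}), finishing the argument. There is no real obstacle at this stage; the substantive work lies upstream, in Proposition~\ref{PropMaxVarIntroduction} (where Mumford-Tate theory controls the image of $\mathcal P$) and in Theorem~\ref{MainTheorem} (where maximality of weight $1$ variation is translated via Proposition~\ref{CriterionIntroduction} into vanishing of $j_b^*$ on $H^{2n-1}$, hence into Abel-Jacobi triviality).
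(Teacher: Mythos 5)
Your proposal is correct and coincides with the paper's intended argument: the corollary is stated without a separate proof precisely because it is the immediate concatenation of Proposition~\ref{PropMaxVarIntroduction} (giving maximality of the weight $1$ variation) with Theorem~\ref{MainTheorem} (converting that maximality, under $\clubsuit$, into triviality of the Abel-Jacobi map). The hypothesis check you perform is exactly what is needed, and nothing further is required.
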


Let $p: \mathcal L\to B$, $q: \mathcal L\to X$ be a Lagrangian family, and let
\[\pi:\mathcal A:=Alb(\mathcal L/B)\to B
\] 
be the relative Albanese variety of $p: \mathcal L\to B$. In the proof of Theorem~\ref{MainTheorem} and Proposition~\ref{PropMaxVarIntroduction}, we use a similar construction to those in~\cite{LazaSaccaVoisin, VoisinTriangle} to get a holomorphic $2$-form $\sigma_{\mathcal A}$ on $\mathcal A$. If we assume the condition $\clubsuit$,  $\pi:\mathcal A\to B$ is a Lagrangian fibration with respect to $\sigma_{\mathcal A}$(see Section~\ref{SectionLagrangianFibrations}). It is interesting to notice that, by this construction, under condition $\clubsuit$, we can translate the problem concerning Lagrangian families to a problem concerning Lagrangian fibrations. However, the total space of the Lagrangian fibration is no longer a hyper-Kähler manifold, but a completely integrable system over an open subset of the base, as introduced and studied in~\cite{DonagiMarkman}.

The organisation of the article is as follows. In Section~\ref{SectionCrierion}, we prove Proposition~\ref{CriterionIntroduction}. In Section~\ref{SectionLagrangianFibrations}, we construct a Lagrangian fibration structure on the relative Albanese variety and use it to prove Theorem~\ref{MainTheorem}. In Section~\ref{SectionMaxVar}, we discuss the condition on the maximality of the variation of Hodge structures. In Section~\ref{Examples}, we construct Lagrangian families satisfying $\clubsuit$ whose Abel-Jacobi map is nontrivial, showing that Theorem~\ref{MainTheorem} is optimal. 

\subsection*{Acknowledgement}
I would like to thank Claire Voisin for sharing her ideas with me. This paper would not be finished without her insight, instructions and encouragement. Many constructions and examples in the article are fruits of friendly and interesting discussions with her. I would also like to thank Fabrizio Anella, Peter Yi Wei and  Tim Ryan for helpful discussions.

This work was done during the preparation of my PhD thesis. I would like to thank the \emph{Institut de Mathématiques de Jussieu - Paris Rive Gauche} for marvelous research environment. The thesis is supported by the ERC Synergy Grant HyperK (Grant agreement No. 854361).

\section{A Criterion}\label{SectionCrierion}
In this section, we establish a criterion for the vanishing of the Abel-Jacobi map (\ref{AbelJacobiMap}).  The notation $j_b: L_b\to X$ as in the introduction, we prove

\begin{prop}\label{Criterion}
 Consider a Lagrangian family of hyper-Kähler manifold $X$ of dimension $2n$ given by a diagram as in (\ref{LagrangianFamily}).  \\
(a) If for general $b\in B$, the restriction map
\[j_b^*: H^{2n-1}(X,\mathbb Q)\to H^{2n-1}(L_b,\mathbb Q)
\]
is zero, then the Abel-Jacobi map (\ref{AbelJacobiMap}) is trivial.\\
(b) If condition $\clubsuit$ holds (see Proposition~\ref{CriterionIntroduction}), then the converse of (a) holds.
\end{prop}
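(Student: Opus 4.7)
My plan is to prove both parts simultaneously by establishing a single explicit identity relating $d\Psi^{AJ}_{\mathcal L}$ at a general $b \in B$ to the contraction $\alpha_b(v):=\iota_v(q^*\sigma_X)|_{L_b}\in H^0(L_b,\Omega_{L_b})$ from condition $\clubsuit$.

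First I would compute $d_v\Psi^{AJ}_{\mathcal L}$ via the Poincaré-dual description $T_0J^{2n-1}(X)=(F^{n+1}H^{2n+1}(X,\mathbb{C}))^\vee$. Writing $\Phi^n_X(q_*(L_b-L_0))$ as a chain integral over $q_*(p^{-1}\gamma_b)$ for a path $\gamma_b$ in $B$ from $0$ to $b$ and differentiating in the chain parameter at $b$, I obtain, for any lift $\tilde v$ of $v\in T_{B,b}$ to $T_{\mathcal L}|_{L_b}$,
$$\int_X d_v\Psi^{AJ}_{\mathcal L}\cup\eta \;=\; \int_{L_b}\iota_{\tilde v}(q^*\eta)|_{L_b},\qquad \eta\in F^{n+1}H^{2n+1}(X,\mathbb{C}),$$
the right-hand side being independent of the lift since $\eta$ is closed. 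Griffiths' transversality applied to the algebraic cycle family $(p,q)_*\mathcal L\subset B\times X$ places $d_v\Psi^{AJ}_{\mathcal L}$ in $H^{n-1,n}(X)$, so only $\eta\in H^{n+1,n}(X)$ contributes.

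Next I would exploit the Lagrangian condition. For $\omega'\in H^{n-1,n}(X)$ and $\eta=\sigma_X\wedge\omega'\in H^{n+1,n}(X)$, the identity $q^*\sigma_X|_{L_b}=0$ together with the Leibniz rule gives $\iota_{\tilde v}(q^*\eta)|_{L_b}=\alpha_b(v)\wedge j_b^*\omega'$, and the projection formula yields
$$\int_X d_v\Psi^{AJ}_{\mathcal L}\cup\eta \;=\; \int_{L_b}\alpha_b(v)\wedge j_b^*\omega' \;=\; \pm\int_X j_{b*}\alpha_b(v)\cup\omega'.$$
Letting $\omega'$ vary over $H^{n-1,n}(X)$ and invoking the perfect Serre duality pairing $H^{n-1,n}(X)\otimes H^{n+1,n}(X)\to\mathbb{C}$, I deduce the key identity
$$(\ast)\qquad \sigma_X\cup d_v\Psi^{AJ}_{\mathcal L}\;=\;\pm\,j_{b*}\alpha_b(v)\quad\text{in}\ H^{n+1,n}(X).$$

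With $(\ast)$ in hand, both implications fall out quickly. Poincaré duality on $X$ and on $L_b$ identifies $j_b^*$ on $H^{2n-1}(X,\mathbb{Q})$ with the transpose of $j_{b*}$ on $H^1(L_b,\mathbb{Q})$, so these two maps vanish simultaneously. For part (a), vanishing of $j_b^*$ annihilates the right-hand side of $(\ast)$; Verbitsky's hyperkähler Lefschetz theorem gives injectivity of $\sigma_X\cup\colon H^{n-1,n}(X)\to H^{n+1,n}(X)$, so $d_v\Psi^{AJ}_{\mathcal L}=0$ at a general $b$, and the connectedness of $B$ combined with $\Psi^{AJ}_{\mathcal L}(0)=0$ yields $\Psi^{AJ}_{\mathcal L}\equiv 0$. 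For part (b), $\clubsuit$ makes $v\mapsto\alpha_b(v)$ an isomorphism onto $H^{1,0}(L_b)$, so the vanishing of $d\Psi^{AJ}_{\mathcal L}$ and $(\ast)$ force $j_{b*}\equiv 0$ on $H^{1,0}(L_b)$; by complex conjugation (applied to a rational morphism) it also vanishes on $H^{0,1}(L_b)$, hence on $H^1(L_b,\mathbb{C})$ and then on $H^1(L_b,\mathbb{Q})$, giving $j_b^*\equiv 0$ on $H^{2n-1}(X,\mathbb{Q})$. The hard part I expect will be the careful derivation of $(\ast)$, in particular handling any non-smoothness of $q$ by restricting to the open locus where $q|_{L_b}$ is an embedding, and invoking the hyperkähler Lefschetz statement to bridge vanishing of $\sigma_X\cup d_v\Psi^{AJ}_{\mathcal L}$ and vanishing of $d_v\Psi^{AJ}_{\mathcal L}$ itself in part (a).
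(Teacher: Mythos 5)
Your proposal is correct and follows essentially the same route as the paper: your identity $(\ast)$ is exactly the paper's commutative diagram relating $\cup\,\sigma_X\circ\Psi^{AJ}_{\mathcal L,*,b}$ to $j_{b*}\circ(\lrcorner\, q^*\sigma_X)$, and both directions are then concluded via Serre/Poincaré duality, Hodge symmetry, and the invertibility of $\cup\,\sigma_X\colon H^n(X,\Omega_X^{n-1})\to H^n(X,\Omega_X^{n+1})$ (which the paper gets more cheaply from the bundle isomorphism $\wedge\sigma_X\colon\Omega_X^{n-1}\to\Omega_X^{n+1}$, with no need for Verbitsky). The paper derives the key identity via the Leray filtration on $\Omega^{\bullet}_{\mathcal L|L_b}$ rather than your Leibniz-rule computation on form representatives, but the two derivations are equivalent.
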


\begin{rmk}
Since $H^{2n-1}(L_b,\mathbb Q)$ has a weight $1$ Hodge structure, by Hodge symmetry and using the fact $j_b^*: H^{2n-1}(X,\mathbb Q)\to H^{2n-1}(L_b,\mathbb Q)$ is a morphism of Hodge structures, $j_b^*: H^{2n-1}(X,\mathbb Q)\to H^{2n-1}(L_b,\mathbb Q)$ is zero, if and only if $j_b^*: H^{n-1,n}(X)\to H^{n-1,n}(L_b)$ is zero.
\end{rmk}

\begin{prf}
Let $\Psi_{\mathcal L, *, b}^{AJ}: T_{B,b}\to H^n(X,\Omega_X^{n+1})$ denote the differential of the Abel-Jacobi map $\Psi_{\mathcal L}^{AJ}$ at point $b\in B$. Let $j_{b*}: H^0(L_b,\Omega_{L_b})\to H^n(X,\Omega_X^{n+1})$ be the Gysin map, which is the Serre dual of the following composition
\begin{equation}\label{SerreDualOfTheGysinMap}
    j_b^*: H^n(X,\Omega_X^{n-1})\to H^n(L_b,\Omega_{\mathcal L|L_b}^{n-1})\to H^n(L_b,\Omega_{L_b}^{n-1}).
\end{equation}
By the above remark, the proposition follows from the following lemma and the fact that 
\[\cup\sigma_X: H^n(X,\Omega_X^{n-1})\to H^n(X,\Omega_X^{n+1})\]
is an isomorphism since $\wedge\sigma_X: \Omega_X^{n-1}\to \Omega_X^{n+1}$ is a vector bundle isomorphism.
\end{prf}
\begin{lmm}
The following diagram is commutative:
\begin{equation}\label{CommDiagLmm}
    \begin{tikzcd}
    T_{B,b}\arrow{r}{\Psi_{\mathcal L, *, b}^{AJ}}\arrow{d}{\lrcorner q^*\sigma_X}& H^n(X,\Omega_X^{n-1})\arrow{d}{\cup \sigma_X}\\
    H^0(L_b,\Omega_{L_b})\arrow{r}{j_{b*}}&    H^n(X,\Omega_X^{n+1}).
    \end{tikzcd}
\end{equation}
\end{lmm}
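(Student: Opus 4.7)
The plan is to test the claimed equality $\sigma_X \cup \Psi^{AJ}_{\mathcal L, *, b}(v) = j_{b*}(v \lrcorner q^*\sigma_X)$ in $H^n(X, \Omega_X^{n+1})$ against an arbitrary class $\alpha \in H^n(X, \Omega_X^{n-1})$ via the non-degenerate Serre duality pairing $H^n(X, \Omega_X^{n+1}) \otimes H^n(X, \Omega_X^{n-1}) \to \mathbb C$. The key input is the standard integration formula for the Abel--Jacobi differential: choose a $C^\infty$ lift $\tilde v$ of $v$ to a vector field on $\mathcal L$ along $L_b$, available because $p$ is a submersion. Then, under the Poincar\'e/Serre duality identification between $T_0 J^{2n-1}(X) = H^{2n-1}(X, \mathbb C)/F^n$ and $F^{n+1} H^{2n+1}(X, \mathbb C)$, every closed form $\omega$ representing a class in the latter satisfies
\[
\langle \Psi^{AJ}_{\mathcal L, *, b}(v), \omega \rangle = \int_{L_b} (\tilde v \lrcorner q^*\omega)\big|_{L_b}.
\]
This is obtained by writing $\Psi^{AJ}_{\mathcal L}(b) = \int_{\Gamma(b)}(\cdot)$ with $\Gamma(b) = q_*(p^{-1}(\gamma))$ for a path $\gamma \subset B$ ending at $b$, and computing the infinitesimal flux across the moving boundary $L_b$. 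Independence of the integrand from the choice of lift is automatic, since for $w \in T_{L_b}$ one has $(w \lrcorner q^*\omega)|_{L_b} = w \lrcorner j_b^*\omega = 0$ because $j_b^*\omega$ is a $(2n+1)$-form on the $2n$-real-dimensional $L_b$.

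The commutativity of (\ref{CommDiagLmm}) then reduces to a short calculation. Specializing the formula above to $\omega = \sigma_X \wedge \alpha \in H^n(X, \Omega_X^{n+1}) \subset F^{n+1} H^{2n+1}(X)$, the Leibniz rule for the interior product gives
\[
\tilde v \lrcorner q^*(\sigma_X \wedge \alpha) = (\tilde v \lrcorner q^*\sigma_X) \wedge q^*\alpha + q^*\sigma_X \wedge (\tilde v \lrcorner q^*\alpha),
\]
and the Lagrangian hypothesis $q^*\sigma_X|_{L_b} = j_b^*\sigma_X = 0$ kills the second term upon restriction to $L_b$. The first term restricts to $(v \lrcorner q^*\sigma_X) \wedge j_b^*\alpha$, where $v \lrcorner q^*\sigma_X \in H^0(L_b, \Omega_{L_b})$ is precisely the image of $v$ under the left vertical of (\ref{CommDiagLmm}). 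The projection formula for the closed embedding $j_b\colon L_b \hookrightarrow X$ then gives
\[
\int_{L_b} (v \lrcorner q^*\sigma_X) \wedge j_b^*\alpha = \int_X j_{b*}(v \lrcorner q^*\sigma_X) \wedge \alpha,
\]
so $\langle \Psi^{AJ}_{\mathcal L, *, b}(v), \sigma_X \wedge \alpha \rangle = \langle j_{b*}(v \lrcorner q^*\sigma_X), \alpha \rangle$. Rewriting the left-hand side as $\langle \sigma_X \cup \Psi^{AJ}_{\mathcal L, *, b}(v), \alpha \rangle$ by associativity of the cup product in $H^{4n}(X)$ and invoking non-degeneracy of Serre duality yields the lemma.

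The main obstacle will be justifying the flux formula above and ensuring it really picks out the relevant Hodge component. Griffiths transversality for the normal function $b \mapsto \Psi^{AJ}_{\mathcal L}(b)$ forces the differential to land in $H^{n-1,n}(X) = H^n(X, \Omega_X^{n-1})$, the Serre dual of $H^n(X, \Omega_X^{n+1})$, so the pairing against $\sigma_X \wedge \alpha$ indeed sees all of $\Psi^{AJ}_{\mathcal L, *, b}(v)$. Once these Hodge-theoretic identifications are in place---which can be extracted from Voisin's general treatment of the infinitesimal Abel--Jacobi map for families of cycles---the remainder is the purely formal Leibniz/projection-formula computation above, whose only nontrivial ingredient is the Lagrangian vanishing $j_b^*\sigma_X = 0$.
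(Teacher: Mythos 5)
Your argument is correct, and it is essentially the transcendental (de Rham) version of what the paper does sheaf-theoretically: the paper Serre-dualizes the whole square and proves commutativity of the dual diagram by noting that the Lagrangian condition places $q^*\sigma_X$ in $L^1$ of the Leray filtration on $\Omega^2_{\mathcal L|L_b}$, so that $\cup q^*\sigma_X$ shifts the filtration by one and induces $\cup\overline{q^*\sigma_X}:\Omega^{n-1}_{L_b}\to K_{L_b}\otimes\Omega_{B,b}$ on graded pieces --- which is precisely the one surviving term of your Leibniz expansion --- while the identification of $ev_b\circ p_*\circ q^*$ with the dual of $\Psi^{AJ}_{\mathcal L,*,b}$ (Voisin, Ch.~12) plays the role of your flux formula. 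So the two proofs share both key inputs and differ only in language; what your version buys is an explicit integral formula in place of the coherent-duality bookkeeping, at the cost of having to justify that formula and check representative-independence, which you do. Two small points to tidy: $j_b$ is only generically injective rather than a closed embedding, but your ``projection formula'' is simply the definition of $j_{b*}$ as the Serre/Poincar\'e transpose of $j_b^*$ (exactly as the paper defines it), so nothing is lost; and to feed $\sigma_X\cup\alpha$ into the flux formula you should take a $d$-closed pure-type $(n-1,n)$ (e.g.\ harmonic) representative of $\alpha$, and observe that only the $(1,0)$-part of $(\tilde v\lrcorner q^*\sigma_X)|_{L_b}$ pairs nontrivially with $j_b^*\alpha$ --- that $(1,0)$-part being the holomorphic form $v\lrcorner q^*\sigma_X$ appearing as the left vertical arrow of (\ref{CommDiagLmm}).
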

\begin{prf}
We are going to show that the Serre dual of the diagram (\ref{CommDiagLmm}) is commutative.

Let $L^\bullet\Omega^i_{\mathcal L|L_b}$ be the Leray filtration~\cite[Chapter 16]{Voisin} induced on the vector bundle $\Omega_{\mathcal L|L_b}^i$ by the exact sequence
\[0\to \Omega_{B, b}\otimes \mathcal O_{L_b}\to \Omega_{\mathcal L|L_b}\to \Omega_{L_b}\to 0,
\]
and defined by $L^j\Omega^i_{\mathcal L|L_b}=\Omega_{B,b}^j\wedge\Omega_{\mathcal L|L_b}^{i-j}$. Since $L_b$ is supposed to be Lagrangian, $q^*\sigma_X\in H^0(L_b, L^1\Omega_{\mathcal L|L_b}^2)$ and thus the cup product 
\[\cup q^*\sigma_X: \Omega_{\mathcal L|L_b}^{\bullet}\to \Omega_{\mathcal L|L_b}^{\bullet+2}
\]
sends $L^k\Omega_{\mathcal L|L_b}^{\bullet}$ to $L^{k+1}\Omega_{\mathcal L|L_b}^{\bullet+2}$. Denoting $\overline{q^*\sigma_X}$ the image of $q^*\sigma_X$ in $H^0(L_b, Gr_L^1\Omega_{\mathcal L|L_b})\cong H^0(L_b, \Omega_{L_b})\otimes \Omega_{B,b}$, this implies the existence of the following commutative diagram
\begin{equation}\label{CommDiagLerayFilt}
    \begin{tikzcd}
    L^1\Omega_{\mathcal L|L_b}^{n+1}=\Omega_{\mathcal L|L_b}^{n+1}\arrow{r}{}& K_{L_b}\otimes \Omega_{B,b}=Gr_L^1\Omega_{\mathcal L|L_b}^{n+1}\\
    L^0\Omega_{\mathcal L|L_b}^{n-1}=\Omega_{\mathcal L|L_b}^{n-1}\arrow{r}{}\arrow{u}{\cup q^*\sigma_X}&\Omega_{L_b}^{n-1}\arrow{u}{\cup\overline{q^*\sigma_X}}=Gr_L^0\Omega_{\mathcal L|L_b}^{n-1},
    \end{tikzcd}
\end{equation}
where $K_{L_b}$ is the canonical bundle of ${L_b}$.
Taking the $n$-th cohomology of (\ref{CommDiagLerayFilt}) and combine it with $q^*: H^n(X,\Omega_X^\bullet)\to H^n(L_b,\Omega_{\mathcal L|L_b}^\bullet)$, we get the following commutative ladder
\begin{equation}\label{CommLadder}
    \begin{tikzcd}
    H^n(X,\Omega_{X}^{n+1})\arrow{r}{q^*} & H^n(L_b,\Omega_{\mathcal L|L_b}^{n+1})\arrow{r}{}& H^n(L_b,K_{L_b}\otimes \Omega_{B,b})\cong \Omega_{B,b}\\
    H^n(X,\Omega_X^{n-1})\arrow{r}{q^*}\arrow{u}{\cup \sigma_X}& H^n(L_b,\Omega_{\mathcal L|L_b}^{n-1})\arrow{r}{}\arrow{u}{\cup \overline{q^*\sigma_X}} & H^n(L_b, \Omega_{L_b}^{n-1})\arrow{u}{\cup q^*\sigma_X}.
    \end{tikzcd}
\end{equation}

However, it is well-known~\cite[Chapter 12]{Voisin} that the composite in the fist row of the diagram (\ref{CommLadder}) coincides with the dual of $\Psi_{\mathcal L,*,b}$, while as in (\ref{SerreDualOfTheGysinMap}), the composite in the second row is $j_b^*$, so that this diagram is indeed the Serre dual of the diagram (\ref{CommDiagLmm}). This concludes the proof of the lemma.
\end{prf}

\section{Lagrangian Fibrations}\label{SectionLagrangianFibrations}
In this section, we associate to any Lagrangian family satisfying condition $\clubsuit$ a Lagrangian fibration with the help of a construction appeared in~\cite{LazaSaccaVoisin, VoisinTriangle}, and use this Lagrangian fibration to prove Theorem~\ref{MainTheorem}.

\subsection*{General Constructions}
Let (\ref{LagrangianFamily}) be a Lagrangian family of a hyper-Kähler manifold $X$ of dimension $2n$. We fix a relative polarization of $\mathcal L\to B$ given by a hyperplane section of $X$. Let \[
\pi: \mathcal A:=Alb(\mathcal L/B)\to B
\]
be the relative Albanese variety of $p: \mathcal L\to B$. 

\begin{lmm}\label{MumfordConstruction}
Let $l$ be the relative dimension of $p: \mathcal L\to B$. Then there exist an open dense subset $B_0\subset B$ and a finite covering $B_0'\to B_0$ such that, denoting $p_0':\mathcal L_0'\to B_0'$ the base change of $p$ under $B_0'\to B_0\hookrightarrow B$ and $\pi_0': \mathcal A_0'\to B_0'$ the relative Albanese variety of $p_0'$, there is a cycle $Z_0\in CH^l(\mathcal A_0'\times_{B_0'}\mathcal L_0')$ such that \[[Z_0]^*: R^0p_{0*}'\Omega_{\mathcal L_0'/B_0'}\to R^0\pi_{0*}'\Omega_{\mathcal A_0'/B_0'}\]
is an isomorphism.
\end{lmm}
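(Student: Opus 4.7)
The plan is to realize the desired isomorphism through a cycle built from the Poincaré line bundle of the relative Albanese, intersected with $l-1$ copies of the given relative polarization on $\mathcal L$ in order to reach codimension $l$. Shrinking $B$ to a dense open $B_0$ on which $p$ is smooth, I let $\pi_0: \mathcal A_0 := \mathrm{Alb}(\mathcal L_0/B_0) \to B_0$ denote the relative Albanese, an abelian scheme of relative dimension $g := h^{1,0}(L_b)$; the polarization pulled back from $X$ induces a polarization on $\mathcal A_0$ and hence an isogeny $\lambda: \mathcal A_0 \to \hat{\mathcal A}_0$. After passing to a finite cover $B_0' \to B_0$ over which $\mathcal L_0'$ admits a section $\sigma$ (obtained, e.g., by intersecting $\mathcal L_0$ with $l$ general hyperplanes of the polarization $H$ and restricting to the generic étale locus), I have an Albanese morphism $f: \mathcal L_0' \to \mathcal A_0'$ with $f \circ \sigma = 0$. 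Writing $\mathcal P$ for the normalized Poincaré line bundle on $\mathcal A_0' \times_{B_0'} \hat{\mathcal A}_0'$ and $\mathcal P_{\mathcal L} := (f \times \lambda)^* \mathcal P$ for its pullback to $\mathcal L_0' \times_{B_0'} \mathcal A_0'$, I define
\[
Z_0 := \bigl( c_1(\mathcal P_{\mathcal L}) \cdot (pr_{\mathcal L}^* H)^{l-1} \bigr)^t \;\in\; CH^l(\mathcal A_0' \times_{B_0'} \mathcal L_0'),
\]
where the transpose $(\cdot)^t$ swaps the two factors.

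Next I compute $[Z_0]^*$ fiberwise over $b \in B_0'$ via Künneth. By the normalization of $\mathcal P$, the class $c_1(\mathcal P)$ lies purely in the $(1,1)$-Künneth summand $H^1(A_b) \otimes H^1(\hat A_b)$ of $H^2(A_b \times \hat A_b)$, so that
\[
c_1(\mathcal P_{\mathcal L})|_b = \sum_i f^*\omega_i \otimes \lambda^* \omega_i^* \;\in\; H^1(L_b) \otimes H^1(A_b),
\]
where $\{\omega_i\}$ is any basis of $H^1(A_b)$ and $\{\omega_i^*\} \subset H^1(\hat A_b)$ the Poincaré-dual basis. The projection formula then yields, for $\omega \in H^{1,0}(L_b)$,
\[
[Z_0]^* \omega = \sum_i \Bigl( \int_{L_b} \omega \cup H^{l-1} \cup f^*\omega_i \Bigr) \cdot \lambda^* \omega_i^*.
\]

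Since $[Z_0]^*$ preserves Hodge type, only the indices $i$ with $\omega_i \in H^{0,1}(A_b)$ (so $\lambda^*\omega_i^* \in H^{1,0}(A_b)$) contribute, and the bilinear form $(\omega, \omega_i) \mapsto \int_{L_b} \omega \cup H^{l-1} \cup f^*\omega_i$ is the Poincaré pairing of $\omega \cup H^{l-1} \in H^{l,l-1}(L_b)$ with $f^*\omega_i \in H^{0,1}(L_b)$. The Hard Lefschetz map $\cup H^{l-1}: H^{1,0}(L_b) \xrightarrow{\sim} H^{l,l-1}(L_b)$ is an iso, the Albanese universal property gives $f^*: H^{0,1}(A_b) \xrightarrow{\sim} H^{0,1}(L_b)$ iso, and the Poincaré pairing on $L_b$ is non-degenerate; combining, the above bilinear form is non-degenerate, so $[Z_0]^*: H^{1,0}(L_b) \to H^{1,0}(A_b)$ is a fiberwise isomorphism and hence an isomorphism of the Hodge bundles. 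The main obstacle I foresee is the setup of the Poincaré bundle in the relative setting: producing the isogeny $\lambda$ from the polarization on $X$, normalizing $\mathcal P$ using the section $\sigma$, and identifying the $(1,1)$-Künneth component of $c_1(\mathcal P_{\mathcal L})$ with the (transpose of the) Albanese map; once this setup is in place, the iso follows from the Künneth and Hard Lefschetz computation above.
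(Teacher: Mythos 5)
Your proposal is correct, but it reaches the cycle by a genuinely different construction than the paper. The paper picks a complete intersection curve $C_b\subset L_b$, uses the Lefschetz hyperplane theorem to get a surjection $J_{C_b}\to A_b$, splits it up to isogeny by a $\mathbb Q$-section $s$ coming from semisimplicity of polarized Hodge structures, and transports the Poincar\'e divisor of $C_b$ to a cycle on $A_b\times L_b$ via $(\mathrm{id}_{A_b},j)_*(s,\mathrm{id}_{C_b})^*$; the relativization over a generically finite cover is then delegated to ``standard arguments.'' You instead stay on $L_b$ itself: after a finite cover producing a section (hence an Albanese morphism $f$ and a normalized relative Poincar\'e bundle), you take $c_1$ of the pulled-back Poincar\'e bundle of the Albanese, cut down to codimension $l$ with $(pr_{\mathcal L}^*H)^{l-1}$, and verify the fiberwise isomorphism by a K\"unneth computation whose nondegeneracy is exactly Hard Lefschetz plus the Hodge--Riemann relations for $H^1(L_b)$ (the two approaches' key inputs -- Lefschetz on hyperplane sections for curves versus Hard Lefschetz in degree $1$ -- are of course close relatives). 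What your route buys is that the cycle is written down globally and canonically over $B_0'$ once the section exists, with no choice of curve or of the integer $N$ in the $\mathbb Q$-section, so the spreading-out step is essentially automatic; what the paper's route buys is that it avoids the relative dual abelian scheme and the Poincar\'e bundle of $\mathcal A_0'$, needing only the Jacobian of a curve. Two small points in your writeup deserve a word but are not gaps: the isogeny $\lambda$ need not come from the polarization at all (any relative isogeny $\mathcal A_0'\to\hat{\mathcal A}_0'$ works, since only the invertibility of $\lambda^*$ on $H^1(-,\mathbb Q)$ is used), and the phrase ``only the indices with $\omega_i\in H^{0,1}$ contribute'' should be read after complexifying and choosing a Hodge-adapted basis, which is harmless since you are proving a statement about a $\mathbb C$-linear map of Hodge bundles.
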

\begin{prf}
Let $B_0\subset B$ be the subset of regular points of $p:\mathcal L\to B$. For $b\in B_0$, let $C_b\subset L_b$ be a complete intersection curve and $J_{C_b}$ the Jacobian variety of $C_b$. By Lefschetz theorem on hyperplane sections, $j_*: J_{C_b}\to A_b:=Alb(L_b)$ is surjective. By the semi-simplicity of polarized Hodge structures, there exists a $\mathbb Q$-section $s: A_b\to J_{C_b}$ of $j_*$, i.e., there exists $N>0$ such that $j_*\circ s=N\cdot id_{A_b}$. On $J_{C_b}\times C_b$, we have the Poincaré divisor $d_b\in CH^1(J_{C_b}\times C_b)$ such that $[d_b]^*: H^1(C_b,\mathbb Q)\to H^1(J_{C_b}, \mathbb Q)$ is an isomorphism of Hodge structures. Let us consider 
\[\begin{tikzcd}
A_b\times C_b\arrow{r}{(s,id_{C_b})}\arrow{d}{(id_{A_b},j)} & J_{C_b}\times C_b\\
A_b\times L_b & 
\end{tikzcd}
\]
and define $Z_b:=(s,id_{C_b})^*(id_{A_b},j)_*(d_b)\in CH^l(A_b\times C_b)$. Then $[Z_b]^*: H^1(L_b,\mathbb Q)\to H^1(A_b,\mathbb Q)$ is given by the composition $H^1(L_b,\mathbb Q)\stackrel{j^*}{\to} H^1(C_b,\mathbb Q)\stackrel{d^*}{\to} H^1(J_{C_b},\mathbb Q)\stackrel{s^*}{\to} H^1(A_b,\mathbb Q)$, which is an isomorphism by the definition of $s$.

The cycles $Z_b$ are defined fiberwise, but standard arguments \cite[Chapter 3]{VoisinCitrouille} show that for an adequate choice of $N$ appearing in the $\mathbb Q$-section, they can be constructed in family over a smooth generically finite cover $B_0'\to B_0$. We thus get a cycle $Z_0\in CH^l(\mathcal A_0'\times_{B_0'}\mathcal L_0')$ satisfying the desired properties.
\end{prf}

For the sake of simplicity, we shall note $B_0$, $\mathcal L_0$ and $\mathcal A_0$ instead of $B_0'$, $\mathcal L_0'$ and $\mathcal A_0'$. We define a holomorphic $2$-form $\sigma_{\mathcal A_0}$ on $\mathcal A_0$ by setting
\begin{equation}\label{2FormOnA}
    \sigma_{\mathcal A_0}:=[Z_0]^*q_0^*\sigma_X,
\end{equation}
where $q_0:\mathcal L_0\to X$ is the natural map. 
\begin{prop}\label{PropertiesOfThe2Form}
(a) The $2$-form $\sigma_{\mathcal A_0}$ is closed.\\
(b) $\sigma_{\mathcal A_0}$ vanishes on fibers of $\pi_0:\mathcal A_0\to B_0$.\\
(c) The composite morphism $(\lrcorner q_0^*\sigma_X): T_{B_0}\to R^0p_{0*}\Omega_{\mathcal L_0/B_0}\cong R^0\pi_{0*}\Omega_{\mathcal A_0/B_0}$ is given by the contraction $\lrcorner\sigma_{\mathcal A_0}$.
\end{prop}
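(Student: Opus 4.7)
The plan is to dispatch (a) and (b) as formal consequences of the correspondence construction, and then to derive (c) from (b) via a Leray-filtration argument parallel to the one used in Section~\ref{SectionCrierion}.

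For (a), I would note that $\sigma_X$ is $d$-closed, hence so is $q_0^*\sigma_X$. Writing $[Z_0]^*$ as pullback along the projection $\mathrm{pr}_{\mathcal L_0}$, cup product with a closed representative of $[Z_0]$, and proper pushforward along $\mathrm{pr}_{\mathcal A_0}$, each of these three operations commutes with the exterior differential, so $d\sigma_{\mathcal A_0}=[Z_0]^*d(q_0^*\sigma_X)=0$.

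For (b), the relative nature $Z_0 \subset \mathcal A_0 \times_{B_0} \mathcal L_0$ implies that the restriction $\sigma_{\mathcal A_0}|_{A_b}$ is computed by the fiberwise correspondence $Z_b \in CH^l(A_b \times L_b)$ applied to $q_0^*\sigma_X|_{L_b}=j_b^*\sigma_X$. This latter form vanishes by the definition of a Lagrangian subvariety, so $\sigma_{\mathcal A_0}|_{A_b}=0$.

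For (c), I would use the Leray filtrations $L^\bullet \Omega^2_{\mathcal L_0}$ with respect to $p_0$ and $L^\bullet \Omega^2_{\mathcal A_0}$ with respect to $\pi_0$, exactly as in diagram~(\ref{CommDiagLerayFilt}). The Lagrangian condition forces $q_0^*\sigma_X \in H^0(\mathcal L_0, L^1\Omega^2_{\mathcal L_0})$, and its image in $Gr_L^1 = p_0^*\Omega_{B_0}\otimes \Omega_{\mathcal L_0/B_0}$ is by construction the contraction morphism $\lrcorner q_0^*\sigma_X: T_{B_0}\to R^0 p_{0*}\Omega_{\mathcal L_0/B_0}$. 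By (b), $\sigma_{\mathcal A_0}\in H^0(\mathcal A_0, L^1\Omega^2_{\mathcal A_0})$, with $Gr^1_L$-image equal to $\lrcorner \sigma_{\mathcal A_0}$. Since $Z_0$ lives over $B_0$, the correspondence $[Z_0]^*$ preserves both Leray filtrations, and the induced map on $Gr^1_L$ is $\mathrm{id}_{\Omega_{B_0}}\otimes [Z_0]^*_{\mathrm{rel}}$, where $[Z_0]^*_{\mathrm{rel}}$ is the isomorphism of Lemma~\ref{MumfordConstruction}. Comparing the two sides of the resulting commutative square identifies the composite in the statement with $\lrcorner \sigma_{\mathcal A_0}$.

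The main obstacle will be making the correspondence action on genuine holomorphic forms, as opposed to cohomology classes, sufficiently precise in the non-compact setting of $\mathcal L_0$ and $\mathcal A_0$, and in particular justifying that $[Z_0]^*$ respects the Leray filtration at the level of forms. The cleanest route is to work throughout with the relative Hodge sheaves $R^0p_{0*}\Omega^i_{\mathcal L_0/B_0}$, as Lemma~\ref{MumfordConstruction} already does, describing $[Z_0]^*$ locally as the composition of pullback and Gysin pushforward along the generically finite projection $\mathrm{pr}_{\mathcal A_0}: Z_0 \to \mathcal A_0$.
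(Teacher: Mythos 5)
Your treatment of (b) and (c) is essentially the paper's: the paper proves (b) by observing that $Z_0\subset\mathcal A_0\times_{B_0}\mathcal L_0$ forces $[Z_0]^*$ to preserve the Leray filtrations, so that $q_0^*\sigma_X\in H^0(\mathcal L_0,p_0^*\Omega_{B_0}\wedge\Omega_{\mathcal L_0})$ implies $\sigma_{\mathcal A_0}\in H^0(\mathcal A_0,\pi_0^*\Omega_{B_0}\wedge\Omega_{\mathcal A_0})$, and it proves (c) by noting that $[Z_0]^*$ induces the isomorphism $H^0(B_0,\Omega_{B_0}\otimes p_{0*}\Omega_{\mathcal L_0/B_0})\to H^0(B_0,\Omega_{B_0}\otimes\pi_{0*}\Omega_{\mathcal A_0/B_0})$ of Lemma~\ref{MumfordConstruction} and sends $\lrcorner q_0^*\sigma_X$ to $\lrcorner\sigma_{\mathcal A_0}$ --- this is exactly your $Gr^1_L$ computation, just stated more compactly. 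Where you diverge is (a), and there your route runs straight into the obstacle you yourself flag at the end: on the non-compact $\mathcal L_0$ a holomorphic $2$-form need not be closed and does not define a de Rham class, so ``cup with a closed representative of $[Z_0]$ and push forward'' is not obviously the same operation as the one defining $\sigma_{\mathcal A_0}$ in (\ref{2FormOnA}), and making the trace/Gysin description commute with $d$ requires an argument you only sketch. The paper sidesteps all of this: it sets $Z_q:=(id,q_0)_*Z_0\in CH(\mathcal A_0\times X)$, so that $\sigma_{\mathcal A_0}=[Z_q]^*\sigma_X$ by the projection formula, extends $Z_q$ to a cycle $\bar Z_q$ on $\mathcal A'\times X$ for a smooth projective completion $\mathcal A'$ of $\mathcal A_0$ (possible because $X$ is already projective), and observes that $[\bar Z_q]^*\sigma_X$ is a holomorphic $2$-form on the projective manifold $\mathcal A'$, hence automatically closed; restricting back to $\mathcal A_0$ gives (a). I would recommend adopting this compactification argument rather than trying to make the form-level correspondence calculus rigorous on the open varieties --- it is both shorter and exactly the tool that dissolves the difficulty you identified.
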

\begin{prf}
(a) Let $Z_q:=(id, q_0)_*Z_0\in CH(\mathcal A_0\times X)$. Then by the projection formula, $\sigma_{\mathcal A_0}=[Z_q]^*\sigma_X$. Let $\mathcal A'$ be a completion of $\mathcal A_0$. Then $Z_q$ extends to a cycle $\bar Z_q$ of $\mathcal A'\times X$. $\sigma_{\mathcal A_0}$ extends to a $2$-form $\sigma_{\mathcal A'}:=[\bar Z_q]^*\sigma_X$ which is automatically closed since $\mathcal A'$ is projective. Thus, $\sigma_{\mathcal A_0}=\sigma_{\mathcal A'|\mathcal A_0}$ is also closed.

(b) Since $Z_0$ is a cycle in $\mathcal A_0\times_{B_0}\mathcal L_0\subset \mathcal A_0\times \mathcal L_0$, $[Z_0]^*: H^*(\mathcal L_0)\to H^*(\mathcal A_0)$ preserves the Leray filtrations on both sides. Therefore, 
$\sigma_{\mathcal A_0}\in H^0(\mathcal A_0, \pi_0^*\Omega_{B_0}\wedge \Omega_{\mathcal A_0})\subset H^0(\mathcal A_0, \Omega_{\mathcal A_0}^2)$ since $q_0^*\sigma_X\in H^0(\mathcal L_0, p_0^*\Omega_{B_0}\wedge \Omega_{\mathcal L_0})$ by the definition of Lagrangian families. Therefore, $\sigma_{\mathcal A_0}$ vanishes on the fibers of $\pi_0:\mathcal A_0\to B_0$.

(c) By Lemma~\ref{MumfordConstruction}, $[Z_0]^*$ induces an isomorphism $H^0(B_0, \Omega_{B_0}\otimes p_{0*}\Omega_{\mathcal L_0/B_0})\to H^0(B_0, \Omega_{B_0}\otimes \pi_{0*}\Omega_{\mathcal A_0/B_0})$ which sends $\lrcorner q_0^*\sigma_X$ to $\lrcorner\sigma_{\mathcal A_0}$.
\end{prf}

By (b) and (c) of the above Proposition, we get the following commutative diagram:
\begin{equation}\label{MainCommDiag}
    \begin{tikzcd}
    0 \arrow{r}{} & T_{\mathcal A_0/B_0} \arrow{r}{}\arrow{d}{(\lrcorner q_0^*\sigma_X)^*} & T_{\mathcal A_0} \arrow{r}{} \arrow{d}{\lrcorner\sigma_{\mathcal A_0}} & \pi_0^*T_{B_0} \arrow{r}{}\arrow{d}{\lrcorner q_0^*\sigma_X} & 0\\
    0 \arrow{r}{}& \pi_0^*\Omega_{B_0} \arrow{r}{} & \Omega_{\mathcal A_0}\arrow{r}{} &\Omega_{\mathcal A_0/B_0}\arrow{r}{} & 0.
    \end{tikzcd}
\end{equation}

\begin{lmm}
If condition $\clubsuit$ (see Proposition~\ref{CriterionIntroduction}) holds for all $b\in B_0$, then $\sigma_{\mathcal A_0}$ is non degenerate on $\mathcal A_0$.
\end{lmm}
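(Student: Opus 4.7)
The plan is to deduce non-degeneracy of $\sigma_{\mathcal A_0}$ from a four-lemma argument applied to diagram (\ref{MainCommDiag}). Non-degeneracy is equivalent to the middle vertical arrow $\lrcorner\sigma_{\mathcal A_0}: T_{\mathcal A_0}\to \Omega_{\mathcal A_0}$ being an isomorphism of vector bundles. Since both rows of (\ref{MainCommDiag}) are short exact, it is enough to show that the two outer vertical maps are isomorphisms; the four-lemma then forces the middle arrow to be an isomorphism as well.

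For the right-hand vertical arrow $\lrcorner q_0^*\sigma_X: \pi_0^*T_{B_0}\to \Omega_{\mathcal A_0/B_0}$, condition $\clubsuit$ asserts that $\lrcorner q_0^*\sigma_X: T_{B_0,b}\to H^0(L_b,\Omega_{L_b})$ is an isomorphism for every $b\in B_0$. Since the fibers of $\pi_0: \mathcal A_0\to B_0$ are abelian varieties, $\Omega_{\mathcal A_0/B_0}|_{A_b}$ is the trivial bundle with fiber $H^0(A_b,\Omega_{A_b})$, and Lemma~\ref{MumfordConstruction} together with Proposition~\ref{PropertiesOfThe2Form}(c) identifies this fiber with $H^0(L_b,\Omega_{L_b})$ in a way that is compatible with the arrow. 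Thus the right-hand arrow restricts to a constant isomorphism on each fiber of $\pi_0$, so it is a bundle isomorphism.

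For the left-hand vertical arrow $(\lrcorner q_0^*\sigma_X)^*: T_{\mathcal A_0/B_0}\to \pi_0^*\Omega_{B_0}$, translation invariance trivializes the tangent bundle of an abelian variety as $T_{A_b}\cong T_0 A_b\otimes \mathcal O_{A_b}\cong H^0(L_b,\Omega_{L_b})^*\otimes \mathcal O_{A_b}$, while $\pi_0^*\Omega_{B_0}|_{A_b}$ is trivial with fiber $\Omega_{B_0,b}$. Because the middle arrow is contraction with the alternating $2$-form $\sigma_{\mathcal A_0}$, elementary linear algebra for alternating forms forces the restriction of the left arrow to each fiber to be, up to sign, the transpose of the restriction of the right arrow; this is precisely what the superscript $*$ in the labeling of (\ref{MainCommDiag}) is meant to record. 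Hence the left-hand arrow is also a fiberwise, and therefore a global, isomorphism.

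The step I expect to require the most care is the bookkeeping of the natural identifications: matching the fiberwise restriction of $(\lrcorner q_0^*\sigma_X)^*$ with the transpose of the map provided by $\clubsuit$, via the trivialization of $T_{A_b}$ and the identification $H^0(A_b,\Omega_{A_b})\cong H^0(L_b,\Omega_{L_b})$ induced by $[Z_0]^*$. Once this identification is pinned down, everything else is formal and the conclusion is immediate from the four-lemma.
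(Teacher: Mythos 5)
Your proof is correct and follows essentially the same route as the paper: both establish that the right-hand vertical arrow of (\ref{MainCommDiag}) is an isomorphism from condition $\clubsuit$, observe that the left-hand arrow is (up to sign) its transpose because $\sigma_{\mathcal A_0}$ is alternating and vanishes along the fibers, and conclude by the (short) five lemma. The only difference is that you spell out the fiberwise identifications that the paper leaves implicit in the notation $(\lrcorner q_0^*\sigma_X)^*$.
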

\begin{prf}
If condition $\clubsuit$ holds, then $(\lrcorner q_0^*\sigma_X): \pi_0^*T_{B_0}\to \Omega_{\mathcal A_0/B_0}$ is an isomorphism.
By the commutativity of (\ref{MainCommDiag}) and the five lemma, $\lrcorner\sigma_{\mathcal A_0}: T_{\mathcal A_0}\to \Omega_{\mathcal A_0}$ is an isomorphism, which means that $\sigma_{\mathcal A_0}$ is nowhere degenerate.
\end{prf}

\subsection*{Symmetry}
Let (\ref{LagrangianFamily}) be a Lagrangian family of a hyper-Kähler manifold $X$ of dimension $2n$. We fix a relative polarization of $\mathcal L\to B$ given by a hyperplane section of $X$. Let $b\in B$ be a general point. The infinitesimal variation of Hodge structures on degree $1$ cohomology of the fibers of $p: \mathcal L\to B$ at $b$ is given by (see~\cite[Chapters 10, 17]{Voisin})
\[\bar\nabla: T_{B,b}\to Hom(H^0(L_b,\Omega_{L_b}), H^1(L_b,\mathcal O_{L_b})).
\]
Composed with the map $\lrcorner q^*\sigma_X: T_{B,b}\to H^0(L_b,\Omega_{L_b})$, $\bar\nabla$ induces a bilinear map
\begin{equation}
    \begin{array}{cccc}
       S:  & T_{B,b}\times T_{B,b} & \to & H^1(L_b,\mathcal O_{L_b})  \\
         & (u,v) & \mapsto & \bar\nabla_u(v\lrcorner q^*\sigma_X).
    \end{array}
\end{equation}
\begin{prop}\label{SymmetryProp}
The bilinear map $S$ is symmetric in the sense that $S(u,v)=S(v,u)$ for any $u,v\in T_{b,B}$.
\end{prop}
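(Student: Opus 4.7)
The plan is to reduce the symmetry of $S$ to the $\bar\partial$-closedness of the holomorphic $2$-form $q^*\sigma_X$ on $\mathcal L$, via a Dolbeault unfolding of the Kodaira--Spencer description of $\bar\nabla$.

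First, I would recall that for $u\in T_{B,b}$ the map $\bar\nabla_u:H^0(L_b,\Omega_{L_b})\to H^1(L_b,\mathcal O_{L_b})$ is contraction with the Kodaira--Spencer class $\kappa(u)\in H^1(L_b,T_{L_b})$. I would then pick $C^\infty$ $(1,0)$-vector fields $\tilde u,\tilde v$ on $\mathcal L$ near $L_b$ lifting $u,v$, in the sense that $p_*\tilde u|_{L_b}$ and $p_*\tilde v|_{L_b}$ are the constant sections $u$ and $v$ of $p^*T_B|_{L_b}$. A Dolbeault cocycle for $\kappa(u)$ is then the $(0,1)$-form $\bar\partial\tilde u|_{L_b}$: its component in $p^*T_B|_{L_b}$ is the $\bar\partial$ of a constant section, which vanishes, so $\bar\partial\tilde u|_{L_b}$ takes values in $T_{L_b}\subset T_{\mathcal L}|_{L_b}$.

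Next, using the Lagrangian hypothesis $q^*\sigma_X|_{L_b}\in L^1\Omega^2_{\mathcal L|L_b}$ (notation as in the proof of the preceding lemma), the holomorphic $1$-form $\alpha(v):=v\lrcorner q^*\sigma_X$ on $L_b$ is represented by the $(1,0)$-form $\tilde v\lrcorner q^*\sigma_X$ on $\mathcal L$ restricted to $L_b$ and projected to $\Omega_{L_b}$. Unwinding the Dolbeault definition of the contraction $\kappa(u)\lrcorner\alpha(v)$ then gives
\[S(u,v)=\bigl[\,q^*\sigma_X(\tilde v,\bar\partial\tilde u)\,\big|_{L_b}\,\bigr]\in H^1(L_b,\mathcal O_{L_b}),\]
and symmetrically $S(v,u)=\bigl[q^*\sigma_X(\tilde u,\bar\partial\tilde v)|_{L_b}\bigr]$.

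The conclusion is then a single application of Leibniz to the smooth function $q^*\sigma_X(\tilde u,\tilde v)$ on $\mathcal L$. Since $q^*\sigma_X$ is holomorphic, $\bar\partial q^*\sigma_X=0$, and combined with the antisymmetry of $q^*\sigma_X$ this gives
\[\bar\partial\bigl(q^*\sigma_X(\tilde u,\tilde v)\bigr)=q^*\sigma_X(\bar\partial\tilde u,\tilde v)+q^*\sigma_X(\tilde u,\bar\partial\tilde v)=-q^*\sigma_X(\tilde v,\bar\partial\tilde u)+q^*\sigma_X(\tilde u,\bar\partial\tilde v).\]
Restricting to $L_b$, the left-hand side equals $\bar\partial_{L_b}$ of a smooth function on $L_b$, hence is $\bar\partial$-exact and zero in $H^1(L_b,\mathcal O_{L_b})$; this yields $S(v,u)-S(u,v)=0$. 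The main technical step will be the careful justification of the Dolbeault identity for $S(u,v)$ displayed above: tracking the projections $T_{\mathcal L}|_{L_b}\to T_{L_b}$ and $\Omega_{\mathcal L}|_{L_b}\to\Omega_{L_b}$ and verifying that the contributions from the $p^*T_B$ and $p^*\Omega_B$ components cancel. Once that is in place, no further input beyond the holomorphy of $\sigma_X$ is needed.
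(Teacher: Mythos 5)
Your proof is correct, but it takes a different route from the paper's. The paper also begins by rewriting $S(u,v)=\rho(u)\lrcorner(v\lrcorner q^*\sigma_X)$ via Griffiths transversality, but it then establishes the symmetry abstractly: it identifies the relevant diagram (\ref{CommDiagSymm}) as the connecting homomorphism in cohomology of the commutative ladder (\ref{MainCommDiag}), i.e.\ it exploits the fact that the closed $2$-form, vanishing on the fibers, defines a morphism from the tangent exact sequence of the fibration to the cotangent one. Your argument instead unfolds everything into Dolbeault representatives: $\kappa(u)=[\bar\partial\tilde u|_{L_b}]$, $S(u,v)=[q^*\sigma_X(\tilde v,\bar\partial\tilde u)|_{L_b}]$, and a single Leibniz computation on the function $q^*\sigma_X(\tilde u,\tilde v)$ using $\bar\partial q^*\sigma_X=0$ and antisymmetry. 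The two technical points you flag do go through: $dp(\bar\partial\tilde u|_{L_b})=\bar\partial(dp(\tilde u)|_{L_b})=0$ forces $\bar\partial\tilde u|_{L_b}$ to land in $T_{L_b}$ at a general (smooth) fiber, and the Lagrangian condition $q^*\sigma_X\in H^0(L_b,L^1\Omega^2_{\mathcal L|L_b})$ guarantees both that $\tilde v\lrcorner q^*\sigma_X$ pulls back to the holomorphic form $v\lrcorner\overline{q^*\sigma_X}$ on $L_b$ independently of the lift and that pairing it against $T_{L_b}$-valued $(0,1)$-forms recovers $S$; any sign ambiguities in the conventions are uniform in $u,v$ and so do not affect symmetry. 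What your approach buys is a self-contained, elementary proof that isolates exactly which properties of $\sigma_X$ are used (only holomorphy and the Lagrangian condition, not even $d$-closedness); what the paper's approach buys is that the same ladder (\ref{MainCommDiag}) is the structure reused in the proof of the main theorem and makes the link with the Donagi--Markman ``weak cubic condition'' on the integrable system $\mathcal A_0\to B_0$ explicit.
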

\begin{prf}
By Griffiths' transversality~\cite[Chapter 17]{Voisin}, $S(u,v)=\rho(u)\lrcorner(v\lrcorner q^*\sigma_X)$, where $\rho: T_{B,b}\to H^1(L_b, T_{L_b})$ is the Kodaira-Spencer map. Therefore, we need to show that the following diagram is commutative
\begin{equation}\label{CommDiagSymm}
  \begin{tikzcd}
   T_{B,b}\otimes T_{B,b} \arrow{r}{id\otimes \rho}\arrow{d}{\lrcorner q^*\sigma_X\otimes id} & T_{B,b}\otimes H^1(L_b, T_{L_b}) \arrow{d}{\beta}\\
   H^0(L_b,\Omega_{L_b})\otimes T_{B,b}\arrow{r}{\alpha} & H^1(L_b,\mathcal O_{L_b}),
  \end{tikzcd}
\end{equation}
where $\alpha(\omega, v)=\omega\lrcorner \rho(v)$ and $\beta(u,\chi)=(q^*\sigma_X\lrcorner \chi)$. 

To see the commutativity of (\ref{CommDiagSymm}), restrict the commutative ladder (\ref{MainCommDiag}) to the point $b\in B$ and apply the cohomology on $L_b$, then the commutativity of (\ref{MainCommDiag}) implies the commutativity of (\ref{CommDiagSymm}). Indeed, (\ref{CommDiagSymm}) is the connection map of the cohomology of (\ref{MainCommDiag}) tensored by $T_{B,b}$.
\end{prf}

\begin{rmk}
When condition $\clubsuit$ is satisfied, the symmetry of $S$ comes from the completely integrable system structure on $(\mathcal A_0, \sigma_{\mathcal A_0})$. What we proved is in fact the symmetry of 
\begin{equation}
    \begin{array}{cccc}
       S':  & T_{B,b}\times T_{B,b} & \to & H^1(A_b,\mathcal O_{A_b})  \\
         & (u,v) & \mapsto & \bar\nabla_u(v\lrcorner \sigma_{\mathcal A_0}).
    \end{array}
\end{equation}
Fixing a relative polarisation on $\mathcal A_0\to B_0$, we have natural isomorphisms (always under $\clubsuit$): $H^1(A_b,\mathcal O_{A_b})\cong H^0(A_b, \Omega_{A_b})^*\cong T_{B,b}^*$ and we can thus view $S'$ as an element in $T_{B,b}^*\otimes T_{B,b}^*\otimes T_{B,b}^*$. If this relative polarisation is principal, Donagi and Markman proved in~\cite{DonagiMarkman} that $S'$ lies in $Sym^3T_{B,b}^*$. This result is called ``weak cubic condition'' in~\cite{DonagiMarkman}.
\end{rmk}

Now we are ready to prove the main theorem.
\begin{proof}[Proof of Theorem~\ref{MainTheorem}]
Under the assumptions of Theorem~\ref{MainTheorem}, assume by contradiction that the Abel-Jacobi map (\ref{AbelJacobiMap}) is not constant. In what follows, we fix a relative polarization on $p_0:\mathcal L_0\to B_0$ induced from a hyperplane section of $X$, so that $R^{2n-1}p_{0*}\mathbb Q\cong R^1p_{0*}\mathbb Q$. By Proposition~\ref{CriterionIntroduction}, the morphism
\[j^*: H^{2n-1}(X,\mathbb Q)\to R^{2n-1}p_{0*}\mathbb Q\cong R^1p_{0*}\mathbb Q
\]
 of variations of Hodge structures on an open subset $B_0\subset B$ containing $b$ is not zero. Hence, there is a non-zero locally constant sub-variation of Hodge strutures $I:=\mathrm{Im}j^*\subset R^1p_{0*}\mathbb Q$. Since $I$ is locally constant, for any $\omega\in I^{1,0}_b$ and $u\in T_{B,b}$, $\nabla_u(\omega)=0$. Recall that $\clubsuit$ means that $\lrcorner q^*\sigma_X: T_{B,b}\to H^0(L_b,\Omega_{L_b})$ is bijective. Let $F:=(\lrcorner q^*\sigma_X)^{-1}(I^{1,0})\subset T_{B,b}$. Then by the symmetry of $S$ given by Proposition~\ref{SymmetryProp}, $F$ lies in the kernel of $\bar\nabla$, which contradicts our assuption that the variation of Hodge structures is maximal.

\end{proof}

\section{Maximal Variations}\label{SectionMaxVar}
In this section, we study under what conditions could the variation of Hodge structures of a Lagrangian family be maximal. Consider a Lagrangian family of a hyper-Kähler manifold $X$ of dimension $2n$ satisfying the condition $\clubsuit$ given by the diagram as in (\ref{LagrangianFamily}). Let $U\subset B$ be a simply connected open subset of $B_0\subset B$ and let
\begin{equation}\label{ApplicationModules}
    \begin{array}{cccc}
         \mathcal P: & U &\to & Gr(h^{1,0}(L), H^2(L,\mathbb C))\\
          & b & \mapsto & H^{1,0}(L_b)\subset H^1(L_b,\mathbb C)\cong H^1(L,
          \mathbb C),
    \end{array}
\end{equation}
be the local period map of the Lagrangian family. 

In what follows, we are going to use a universal property of the Kuga-Satake construction proved in~\cite{VoisinvGeemen}.
\begin{thm}[\cite{VoisinvGeemen}]\label{universalKS}
Let $(H^2,q)$ be a polarized Hodge structure of hyper-Kähler type of dimension $\geq 5$. Assume that the Mumford–Tate group of the Hodge structure on $H^2$ is maximal, namely the special orthogonal group of $(H^2,q)$. Let $H$ be a simple effective weight-$1$ Hodge structure, such that there exists an injective morphism of Hodge structures of bidegree $(-1,-1)$
\[H^2\hookrightarrow Hom(H, A)
\]
for some weight-$1$ Hodge structure $A$. Then $H$ is a direct summand of the Kuga–Satake Hodge
structure $H^1_{KS}(H^2,q)$. In particular, $\dim H\geq 2^{\lfloor\frac{\dim H^2-1}{2}\rfloor}$.
\end{thm}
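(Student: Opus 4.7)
The plan is to reconstruct the Clifford algebra action from the embedding $\iota: H^2\hookrightarrow \mathrm{Hom}(H,A)$ and then invoke the universal property of the spin representation. Choose polarizations on $H$ and $A$ (which exist since simple effective weight-$1$ $\mathbb Q$-Hodge structures are polarizable), so that for each $\alpha\in H^2_{\mathbb Q}$ we can define the adjoint $\iota_\alpha^t: A\to H$, still a morphism of Hodge structures of bidegree $(-1,-1)$. Form the off-diagonal endomorphism
\[
T_\alpha = \begin{pmatrix} 0 & \iota_\alpha^t \\ \iota_\alpha & 0 \end{pmatrix} \in \mathrm{End}(H\oplus A).
\]
The crux of the argument is to prove that $T_\alpha^2 = q(\alpha)\cdot\mathrm{id}_{H\oplus A}$, after rescaling the polarizations. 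In other words, to show that $\iota_\alpha^t\circ\iota_\alpha = c_H\cdot q(\alpha)\cdot\mathrm{id}_H$ and $\iota_\alpha\circ\iota_\alpha^t = c_A\cdot q(\alpha)\cdot\mathrm{id}_A$, with the same constant.

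To establish this, consider the Mumford-Tate group $G$ of the combined Hodge structure $H^2\oplus H\oplus A$. The hypothesis on the MT group of $H^2$ forces the projection $G\twoheadrightarrow SO(H^2,q)$ to be surjective, and because $\iota$ is a morphism of Hodge structures, the actions of $G$ on $H$ and $A$ intertwine the $SO(H^2,q)$-action on the image of $\iota$. The assignment $\alpha\mapsto \iota_\alpha^t\iota_\alpha$ is then an $SO(H^2,q)$-equivariant quadratic map $H^2\to \mathrm{End}_{HS}(H)$. Since $H$ is simple, $\mathrm{End}_{HS}(H)$ is a division algebra; and since $\dim H^2\geq 5$ the only $SO(H^2,q)$-equivariant quadratic polynomials on $H^2$ valued in a trivial representation are scalar multiples of $q$. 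Hence $\iota_\alpha^t\iota_\alpha=c_H\cdot q(\alpha)\cdot\mathrm{id}_H$, and similarly on the $A$ side. Injectivity of $\iota$ combined with nondegeneracy of the polarizations forces $c_H, c_A\neq 0$, and rescaling polarizations aligns them to a single constant, which can then be normalized to $1$.

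Granted this, the assignment $\alpha\mapsto T_\alpha$ extends to a representation of the Clifford algebra $C(H^2,q)$ on $H\oplus A$, and hence of the even part $C^+(H^2,q)$. The weight-$1$ Hodge structure on $H\oplus A$ is determined by the morphism $h:\mathbb S\to SO(H^2,q)_{\mathbb R}$ defining the HS on $H^2$, via its unique lift to $\mathrm{Spin}(H^2,q)_{\mathbb R}$ acting through the spin representation; this is exactly the Kuga-Satake recipe, so $H\oplus A$ decomposes as a sum of copies of (half-)spin representations, and $H$, being simple, appears as a direct summand of $H^1_{KS}(H^2,q)$. The dimension bound $\dim H\geq 2^{\lfloor (\dim H^2-1)/2\rfloor}$ is then immediate, being the dimension of the smallest irreducible (half-)spin representation of $\mathrm{Spin}(H^2,q)$.

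The principal obstacle is the middle step: extracting the relation $\iota_\alpha^t\iota_\alpha = c\cdot q(\alpha)\cdot\mathrm{id}$ from Mumford-Tate maximality. One has to be careful that the chosen polarizations on $H$ and $A$ are compatible with the $SO(H^2,q)$-action transported through $\iota$, and that the scalars on the $H$-side and $A$-side can be simultaneously normalized. The key rigidity input — that an $SO(V,q)$-equivariant quadratic form on $V$ is necessarily a multiple of $q$ when $\dim V\geq 5$ — is a purely representation-theoretic statement about invariants in $\mathrm{Sym}^2 V^*$, but its application here requires the full strength of the MT-maximality hypothesis to ensure that the equivariance passes from $G$ down to all of $SO(H^2,q)$.
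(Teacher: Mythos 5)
The paper does not prove this statement itself --- it is quoted from \cite{VoisinvGeemen} --- so your proposal must be measured against the argument given there. Your strategy (manufacture a Clifford module structure on $H\oplus A$ from the adjoints and then classify Clifford modules) is natural, but the step you yourself identify as the crux, namely $\iota_\alpha^t\iota_\alpha=c\cdot q(\alpha)\cdot\mathrm{id}_H$, is exactly where the argument breaks, and the justification offered does not work. The assignment $\alpha\otimes\beta\mapsto\iota_\alpha^t\iota_\beta$ is a morphism of Hodge structures $H^2\otimes H^2\to\mathrm{End}(H)$ of bidegree $(-2,-2)$; for rational $\alpha$ the class $\alpha\otimes\alpha$ is rational but \emph{not} of type $(2,2)$, so its image $\iota_\alpha^t\iota_\alpha$ is a rational class of $\mathrm{End}(H)$ with no reason to be of type $(0,0)$: writing $\alpha=\sigma+\omega+\bar\sigma$, the component $\iota_\sigma^t\iota_\omega+\iota_\omega^t\iota_\sigma\in\mathrm{End}(H)^{1,-1}$ need not vanish. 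Hence the map $\alpha\mapsto\iota_\alpha^t\iota_\alpha$ is not a priori valued in the trivial representation $\mathrm{End}_{HS}(H)$, and the invariant-theoretic fact that $(\mathrm{Sym}^2(H^2)^*)^{SO(q)}=\mathbb Q\, q$ cannot be applied. What you actually need is that the $G$-equivariant map $\mathrm{Sym}^2H^2\to\mathrm{End}(H)$ kills the irreducible summand $\mathrm{Sym}^2_0H^2$, and this is \emph{not} a formal consequence of Mumford--Tate maximality: $\mathrm{Sym}^2_0V$ does occur in $\mathrm{End}(W)$ for plenty of $SO(V)$-representations $W$ (already for $W=V$). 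Ruling it out requires knowing that the $\mathfrak{so}(q)$-constituents of $H$ and $A$ are spin-type representations, which is essentially the conclusion, so the argument as written is circular at its central point. Two further (smaller) issues: a nonzero element of the division algebra $\mathrm{End}_{HS}(H)$ need not be a rational scalar, so at best you get $T_\alpha^2=q(\alpha)E$ for a Hodge endomorphism $E$ that rescaling polarizations cannot in general reduce to the identity; and $c_H\neq0$ needs the positivity of the polarization, not mere nondegeneracy, since the image of $\iota_\alpha$ could be isotropic for the alternating form.

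The proof in \cite{VoisinvGeemen} bypasses the Clifford relation. One restricts all representations to a copy of $\mathfrak{so}(H^2,q)$ inside the reductive Mumford--Tate Lie algebra of $H^2\oplus H\oplus A$ and uses the Hodge cocharacter: since $H^2$ is of hyper-Kähler type, the cocharacter pairs with the weights of the standard representation taking only the values $-1,0,1$, while $H$ and $A$, being effective of weight $1$, have only two Hodge pieces; a highest-weight computation then forces every nontrivial irreducible $\mathfrak{so}(q)$-constituent of $H$ to have $\lambda_1=\tfrac12$, i.e.\ to be a (half-)spin representation, and injectivity of $H^2\hookrightarrow\mathrm{Hom}(H,A)$ together with the two-piece constraint on $A$ guarantees that $\mathfrak{so}(q)$ acts nontrivially on $H$. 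This yields both the identification with a summand of $H^1_{KS}(H^2,q)$ and the dimension bound. If you want to keep your architecture, you must run this weight argument first; the Clifford module structure then comes out as a consequence rather than serving as an input.
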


With the same notations as in the introduction, we prove

\begin{prop}\label{PropMaxVar}
Assume that the Mumford-Tate group of the Hodge structure $H^2(X,\mathbb Q)$ is maximal, i.e. it is the special orthogonal group of $(H^2(X,\mathbb Q)_{tr},q)$ and assume $b_2(X)_{tr}\geq 5$. If the dimension of $H^{0,1}(L_b)$ is smaller than $2^{\lfloor\frac{b_2(X)_{tr}-3}{2}\rfloor}$ for a general fiber $L_b$ of $p: \mathcal L\to B$, then the variation of weight $1$ Hodge structure of $p$ is maximal.
\end{prop}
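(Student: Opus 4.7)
My plan is to prove Proposition~\ref{PropMaxVar} by contradiction, applying the Kuga--Satake universal property (Theorem~\ref{universalKS}) to $H^2=H^2(X,\mathbb{Q})_{tr}$, which under our hypotheses is a polarised Hodge structure of hyper-Kähler type with $\dim\geq 5$ and maximal Mumford--Tate group (hence simple). Suppose the weight-$1$ variation of $p$ is not maximal. The strategy is to produce an injective morphism of rational Hodge structures
\[
\phi\colon H^2(X,\mathbb{Q})_{tr}\hookrightarrow \operatorname{Hom}(H,A)
\]
of bidegree $(-1,-1)$, where $H$ is a non-trivial effective weight-$1$ Hodge structure that embeds into $H^1(L_b,\mathbb{Q})$ and $A$ is an auxiliary effective weight-$1$ Hodge structure. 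Granting such $\phi$, Theorem~\ref{universalKS} yields $\dim H\geq 2^{\lfloor(b_2(X)_{tr}-1)/2\rfloor}=2\cdot 2^{\lfloor(b_2(X)_{tr}-3)/2\rfloor}$, whereas $\dim H\leq \dim H^1(L_b,\mathbb{Q})=2h^{0,1}(L_b)< 2^{\lfloor(b_2(X)_{tr}-1)/2\rfloor}$ by hypothesis---a contradiction.

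The first step is to extract from the failure of maximality an appropriate sub-Hodge structure $H$. Under condition $\clubsuit$ and Proposition~\ref{SymmetryProp}, the differential of the period map at $b$ is captured by the symmetric bilinear form $S\colon \operatorname{Sym}^2 H^{1,0}(L_b)\to H^{0,1}(L_b)$, and non-maximality provides a non-zero $\omega_0\in H^{1,0}(L_b)$ with $S(\omega_0,\cdot)=0$, i.e.\ a first-order $\bar\nabla$-flat element of $\mathcal{F}^1 R^1p_{0*}\mathbb{C}$ at $b$. Invoking the semisimplicity of polarised rational variations of Hodge structures (Deligne's theorem of the fixed part), the flat directions propagate to a non-trivial locally constant sub-VHS $I\subset R^1p_{0*}\mathbb{Q}$ on a neighbourhood of $b$, and I take $H:=I_b$, which is then a sub-Hodge structure of $H^1(L_b,\mathbb{Q})$.

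The second step is the construction of $\phi$, using the Lagrangian fibration $\pi_0\colon \mathcal{A}_0\to B_0$ of Section~\ref{SectionLagrangianFibrations} together with a smooth projective compactification $\mathcal{A}'\supset\mathcal{A}_0$ and the extended correspondence $\bar Z_q\in CH(\mathcal{A}'\times X)$. The induced morphism $[\bar Z_q]^*\colon H^2(X,\mathbb{Q})\to H^2(\mathcal{A}',\mathbb{Q})$ sends $\sigma_X$ to the $2$-form $\sigma_{\mathcal{A}'}$, which vanishes on the generic fibres of $\pi'\colon \mathcal{A}'\to B'$ and so lies in the Leray step $L^1H^2(\mathcal{A}')$. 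Combining this with the isomorphism $[Z_0]^*\colon R^1p_{0*}\mathbb{Q}\cong R^1\pi_{0*}\mathbb{Q}$, the inclusion $I\subset R^1p_{0*}\mathbb{Q}$, and the quotient $A:=R^1p_{0*}\mathbb{Q}|_b/I_b$, one produces a Hodge morphism $\phi$ of bidegree $(-1,-1)$. Simplicity of $H^2(X,\mathbb{Q})_{tr}$ ensures $\phi$ is either zero or injective, and one checks non-triviality by observing that $\phi(\sigma_X)$ captures the non-degenerate part of $S$, which does not vanish.

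The main obstacle is precisely this second step: producing $\phi$ with the correct bidegree $(-1,-1)$ and confirming that $H=I_b$ sits as a sub-Hodge structure of $H^1(L_b,\mathbb{Q})$. This requires a careful analysis of the Leray spectral sequence of $\pi'\colon \mathcal{A}'\to B'$ and of how the class $[\bar Z_q]^*\sigma_X=\sigma_{\mathcal{A}'}$ distributes across its graded pieces. The key input will be the identity---already implicit in Proposition~\ref{SymmetryProp}---that the form $S$, equivalently the Gauss--Manin derivative $\bar\nabla$, is recovered as the fibrewise derivative of $\sigma_{\mathcal{A}'}$, thereby linking the Hodge-theoretic data of $X$ to that of $H^1(L_b,\mathbb{Q})$ in a way suitable for the Kuga--Satake comparison.
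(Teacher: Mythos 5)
Your global strategy (argue by contraposition and feed $H^2(X,\mathbb Q)_{tr}$ into the Kuga--Satake universal property of Theorem~\ref{universalKS}, then compare $2^{\lfloor (b_2(X)_{tr}-1)/2\rfloor}$ with $2h^{0,1}(L_b)$) is the same as the paper's, but both of your intermediate steps have genuine problems. In Step 1 you claim that non-maximality of the period map produces a non-zero \emph{locally constant} sub-VHS $I\subset R^1p_{0*}\mathbb Q$ on a whole neighbourhood of $b$. This does not follow. What non-maximality gives you is a non-zero kernel of $\bar\nabla$ at the general point, i.e.\ positive-dimensional fibers of the period map (\ref{ApplicationModules}), along which the weight-$1$ Hodge structure is constant; it does not give a flat rational sub-local-system in \emph{all} directions of $B$. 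Deligne's fixed-part theorem identifies the largest constant sub-VHS when one exists, but it cannot manufacture flat sections out of a single first-order-flat vector $\omega_0$ at a point (and $\omega_0\in H^{1,0}$ is not rational, so there is no reason its ``propagation'' is defined over $\mathbb Q$). A family of curves whose period map is dominant onto moduli but has $1$-dimensional fibers has no constant sub-VHS at all, so the implication you assert is false in general. The correct object to extract from non-maximality is the fiber $F_b$ of the period map through $b$, over which $\pi:\mathcal A\to B$ becomes isotrivial; this is what the paper uses.

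Step 2 is where the actual content lies, and you leave it open (you say so yourself). The injection into $Hom(H,A)$ cannot be built from $I_b$ and the quotient $H^1(L_b,\mathbb Q)/I_b$: both of your proposed factors live in the fiber cohomology, and there is no natural bidegree-$(-1,-1)$ morphism from $H^2(X)_{tr}$ into their $Hom$. The paper's construction takes the two factors from \emph{different} places: after trivializing $\mathcal A$ over (a finite cover of) the period-map fiber $U_b\subset F_b$ and passing to a smooth completion, $\mathcal A_{F_b}$ is birational to $F_b\times A_b$, and the map is the composite $H^2(X)_{tr}\to H^2(\mathcal A)\to H^2(\mathcal A_{F_b})\to H^1(F_b)\otimes H^1(A_b)$ (K\"unneth projection), i.e.\ one factor is the cohomology of the compactified \emph{base} of the isotrivial piece and the other is $H^1$ of the Albanese fiber. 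Non-vanishing of this map on $\sigma_X$ is then not a formal consequence of Proposition~\ref{SymmetryProp}: it is a rank argument --- the $H^0(F_b)\otimes H^2(A_b)$ component vanishes because $A_b$ is Lagrangian for $\sigma_{\mathcal A}$, and if the $H^1\otimes H^1$ component also vanished, $\sigma_{\mathcal A}$ would have rank $\le\dim F_b$ on $\mathcal A_{U_b}$, contradicting the lower bound $2\dim F_b$ forced by the non-degeneracy of $\sigma_{\mathcal A_0}$ (which is where condition $\clubsuit$ enters) together with $\dim F_b\ge 1$. Without this construction and this non-vanishing argument, your proof does not go through.
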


\begin{prf}
We use the same argument as in~\cite{VoisinvGeemen} where similar results were proved for Lagrangian fibrations. Assuming that the period map (\ref{ApplicationModules}) is not generically an immersion, we are going to prove that $\dim H^{0,1}(L_b)\geq 2^{\lfloor\frac{b_2(X)_{tr}-3}{2}\rfloor}$. By assumption, the nonempty general fibers of $\mathcal P$ are of dimension $\geq 1$. Let $b\in U$ be a general point and let $B_b$ the fiber of $\mathcal P$ passing through $b$. Let $U_b=B_b\cap U$. Then the fibers of $\pi|_{U_b}: \mathcal A_{U_b}\to U_b$ are isomorphic with each other.  Thus, up to a base change by a finite covering of $U_b$, we may assume $\pi|_{U_b}: \mathcal A_{U_b}\to U_b$ is trivial, i.e., $\mathcal A_{U_b}=U_b\times A_b$. Let $\pi_{F_b}:\mathcal A_{F_b}\to F_b$ be a smooth completion of $\pi|_{U_b}$, then $\mathcal A_{F_b}$ is birational to $F_b\times A_b$, which gives a morphism $H^2(\mathcal A_{F_b})\to H^2(F_b\times A)$. Recall by Lemma~\ref{MumfordConstruction}, we get a morphism $[Z]^*: H^2(X)\to H^2(\mathcal A)$ that sends $\sigma_X$ to a holomorphic $2$-form which is non-degenerate on $\mathcal A_U$. Finally, the rational map $\mathcal A_{F_b}\dashrightarrow \mathcal A$ induces $H^2(\mathcal A)\to H^2(\mathcal A_{F_b})$. Compositing all these maps, we get a morphism
\begin{equation}
\alpha: H^2(X)_{tr}\hookrightarrow H^2(X)\to H^2(\mathcal A)\to H^2(\mathcal A_{F_b})\to H^2(F_b\times A_b)\to H^1(F_b)\otimes H^1(A_b),
\end{equation}
where the last map is given by the projection in the Künneth decomposition. 
\begin{lmm}
$\alpha: H^2(X)_{tr}\to H^1(F_b)\otimes H^1(A_b)$ is injective.
\end{lmm}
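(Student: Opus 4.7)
The plan is to use that under the Mumford--Tate assumption, $H^2(X,\mathbb{Q})_{tr}$ is irreducible as a $\mathbb{Q}$-Hodge structure, since the MT group is the full special orthogonal group acting via its standard representation, which is absolutely irreducible. The map $\alpha$ is a morphism of $\mathbb{Q}$-Hodge structures: each of its constituents -- the action of the correspondence $[Z]^*$, the pullbacks along the (resolved) rational maps $\mathcal{A}_{F_b}\dashrightarrow \mathcal{A}$ and $\mathcal{A}_{F_b}\dashrightarrow F_b\times A_b$, and the Künneth projection -- respects Hodge structures. Therefore $\ker\alpha$ is a sub-Hodge structure of $H^2(X,\mathbb{Q})_{tr}$ and, by irreducibility, equals either $0$ or all of $H^2(X,\mathbb{Q})_{tr}$.

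To rule out the second alternative, I would exhibit a non-zero element in the image by tracing $\sigma_X$. Since $\sigma_X$ is orthogonal to $H^{1,1}(X)$ with respect to the BBF form, it lies in $H^{2,0}(X)\subset H^2(X,\mathbb{C})_{tr}$. By construction (\ref{2FormOnA}), $[Z]^*\sigma_X = \sigma_{\mathcal{A}}$ on $\mathcal{A}_0$, and under the trivialization $\mathcal{A}_{U_b}\cong U_b\times A_b$ its Künneth component in $H^1(U_b)\otimes H^1(A_b)$ is given by the contraction $\lrcorner\sigma_{\mathcal{A}}:T_{U_b,b}\to H^0(A_b,\Omega_{A_b})$. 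By Proposition~\ref{PropertiesOfThe2Form}~(c) combined with condition $\clubsuit$, this contraction is the restriction of an isomorphism $T_{B,b}\xrightarrow{\cong}H^0(A_b,\Omega_{A_b})$, hence injective on $T_{U_b,b}$; and it is non-zero because the assumption that $\mathcal{P}$ has positive-dimensional general fibers through $b$ forces $\dim U_b\geq 1$. Thus the Künneth component of $\sigma_{\mathcal{A}}|_{\mathcal{A}_{U_b}}$ in $H^1(U_b)\otimes H^1(A_b)$ is non-zero.

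To transfer this non-vanishing back to $F_b\times A_b$, I would use that for a smooth projective $F_b$ with a Zariski open dense $U_b\subset F_b$, the restriction $H^1(F_b,\mathbb{Q})\to H^1(U_b,\mathbb{Q})$ is injective (the local cohomology $H^1_{F_b\setminus U_b}(F_b)$ vanishes for codimension reasons). Tensoring with the fixed group $H^1(A_b)$ preserves injectivity, and the birational map $\mathcal{A}_{F_b}\dashrightarrow F_b\times A_b$ is an isomorphism over $U_b$, so the Künneth component of $\alpha(\sigma_X)$ in $H^1(F_b)\otimes H^1(A_b)$ restricts on $U_b\times A_b$ to the non-zero class computed above. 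Hence $\alpha(\sigma_X)\neq 0$ and $\ker\alpha=0$.

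The main obstacle I anticipate is the careful compatibility across the various rational maps: one must verify, using resolutions of indeterminacy and the birational invariance of $H^2$ modulo exceptional classes, that the class $\alpha(\sigma_X)\in H^2(F_b\times A_b)$ really does restrict on the open locus $U_b\times A_b$ to $\sigma_{\mathcal{A}}|_{\mathcal{A}_{U_b}}$ under the given trivialization. Once this compatibility is established, the argument reduces to Hodge-theoretic irreducibility combined with the direct non-vanishing computation of the $(1,1)$-Künneth component provided by $\clubsuit$.
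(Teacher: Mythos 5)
Your proposal is correct and follows the same skeleton as the paper: reduce to showing $\alpha\neq 0$ via the simplicity of $H^2(X,\mathbb Q)_{tr}$ (the paper deduces simplicity from $h^{2,0}(X)=1$ and the orthogonality of $H^{2,0}$ to $NS(X)$; your appeal to the maximal Mumford--Tate group gives the same conclusion and is available since that hypothesis is in force), and then prove $\alpha(\sigma_X)\neq 0$. Where you diverge is in the non-vanishing step. The paper argues by contradiction with a rank count: if the $H^1(F_b)\otimes H^1(A_b)$ component vanished, then (since the $H^0(F_b)\otimes H^2(A_b)$ component already vanishes because the fibers $A_b$ are Lagrangian for $\sigma_{\mathcal A}$) the restriction $\sigma_{\mathcal A}|_{\mathcal A_{U_b}}$ would be pulled back from $F_b$ and hence have rank $\leq\dim F_b$, whereas nondegeneracy of $\sigma_{\mathcal A}$ on $\mathcal A_U$ forces rank $\geq 2\dim B-2(\dim B-\dim F_b)=2\dim F_b$ on the codimension-$(\dim B-\dim F_b)$ subvariety $\mathcal A_{U_b}$ --- a contradiction once $\dim F_b\geq 1$. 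You instead identify the $(1,1)$-K\"unneth component directly with the contraction $T_{U_b,b}\hookrightarrow T_{B,b}\xrightarrow{\ \cong\ }H^0(A_b,\Omega_{A_b})$ furnished by Proposition~\ref{PropertiesOfThe2Form}~(c) and condition $\clubsuit$, and conclude it is non-zero because this map is injective and $\dim U_b\geq 1$. Both arguments ultimately rest on the nondegeneracy of $\sigma_{\mathcal A}$; yours uses the finer fiberwise statement and yields slightly more (the component is non-zero after contraction with \emph{any} non-zero tangent vector of $U_b$), at the cost of the compatibility bookkeeping you rightly flag --- extending $\sigma_{\mathcal A}$ across the compactification $\mathcal A_{F_b}$ and the birational model $F_b\times A_b$ so that the non-vanishing of the form implies non-vanishing of the class via $H^1(F_b)\hookrightarrow H^1(U_b)$. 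The paper's dimension count sidesteps that bookkeeping; it is the more economical route, while yours is the more explicit one.
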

\begin{prf}
Since $h^{2,0}(X)=1$ and $H^{2,0}(X)$ is orthogonal to $NS(X)$ with respect to the Beauville-Bogomolov-Fujiki form, $H^2(X)_{tr}$ is a simple Hodge structure. Therefore, to show the injectivity of $\alpha$ it suffices to show that $\alpha$ is not zero. We claim that $\alpha(\sigma_X)\neq 0$. Indeed, Since $A_b$ is Lagrangian with respect to $\sigma_{\mathcal A}$ (Proposition~\ref{PropertiesOfThe2Form} (b)), in the Künneth's decomposition of $H^2(A_b\times F_b)$, the image of $\sigma_X$ in $H^0(F_b)\otimes H^2(A_b)$ is zero. If furthermore $\alpha(\sigma_X)=0$ in $H^1(F_b)\otimes H^1(A_b)$, then the image of $\sigma_X$ on $F_b\times A_b$ comes from a $2$-form on $F_b$, which has rank $\leq \dim F_b$. Therefore, the rank of $\sigma_{\mathcal A}$ has rank $\leq \dim F_b$ on $\mathcal A_{U_b}$. On the other hand, the codimension of $\mathcal A_{U_b}$ in $\mathcal A_U$ is $\dim B-\dim F_b$, and thus the non-degeneration of $\sigma_{\mathcal A_U}$ implies that $\sigma_{\mathcal A}$ has rank $\geq 2\dim F_b$ on $\mathcal A_{U_b}$. This is a contradiction since we are assuming $\dim F_b\geq 1$.
\end{prf}
We are now in the position to use the universal property of the Kuga-Satake construction (see Theorem~\ref{universalKS} above). Since $\alpha: H^2(X)_{tr}\to H^1(F_b)\otimes H^1(A_b)$ is nonzero, there is at least one simple direct factor $A$ of $A_b$ such that $H^2(X)_{tr}\to H^1(F_b)\otimes H^1(A)$ is nonzero thus injective. Taking $H^2$ as $H^2(X)_{tr}$, we conclude by Theorem~\ref{universalKS} that 
\[\dim H^{0,1}(L_b)=\dim A_b\geq \dim A\geq \frac12\times 2^{\lfloor \frac{\dim H^2(X)_{tr}-1}{2}\rfloor}=2^{\lfloor\frac{b_2(X)_{tr}-3}{2}\rfloor},\]
as desired.
\end{prf}

\section{Example of a Lagrangian Family with Nontrivial Abel-Jacobi Map}\label{Examples}
Recall the construction of generalized Kummer varieties introduced in~\cite{Beauville}. Let $A$ be an abelian surface and $A^{[n+1]}$ the Hilbert scheme of length $n+1$ subschemes of $A$. Let $\mathrm{alb}: A^{[n+1]}\to A$ be the composition of The Hilbert-Chow morphism and the summation map
\[A^{[n+1]}\to A^{(n+1)}\to A.\]
Then $alb$ is an isotrivial fibration.
The generalized Kummer variety $K_n(A)$ is defined to be the fiber of $alb$ over $0\in A$. As is shown in~\cite{Beauville}, $K_n(A)$ is a hyper-Kähler manifold of dimension $2n$.

In this section, we are going to construct Lagrangian families of $X:=K_n(A)$ for $n\geq 2$, satisfying condition $\clubsuit$ and whose Abel-Jacobi map is \emph{not} trivial. 

For any $x\in A$, one defines a subvariety $Z_x$ of $K_n(A)$ consisting of Artinian subschemes of $A$ of length $n+1$ supported on $x$ and $-nx$, with multiplicities $n$ and $1$, respectively. By \cite[Proposition VI.1.1]{Briancon}, $Z_x$ is a \emph{rational} variety of dimension $n-1$ if $x$ is \emph{not} an $(n+1)$-torsion point. Let $Z=\bigcup_{x\in A} Z_x$ and let $\pi: Z\to A$ send elements in $Z_x$ to $x$. For any curve $C\subset A$, define $Z_C=\bigcup_{x\in C} Z_x$. 

Now let $B$ be a connected open subset of the Hilbert scheme of deformations of a smooth curve $C\subset A$ and $\mathcal C\to B$ the corresponding family.
\begin{lmm}
$\{Z_C\}_{C\in B}$ is a Lagrangian family of $K_n(A)$ satifying condition $\clubsuit$.
\end{lmm}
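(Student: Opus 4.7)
The plan is to realize $\mathcal L$ as (a desingularization of) $\bigcup_{C\in B}Z_C$, to verify the Lagrangian property by exploiting the rationality of the punctual pieces $Z_x$, and to establish $\clubsuit$ via an explicit tangent-space computation at a general point.

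For the setup I would take $\mathcal C\subset B\times A$ to be the universal curve and $P\to A$ the relative (desingularized) punctual Hilbert scheme, whose fiber over $x\in A$ parametrizes length-$n$ subschemes of $A$ supported at $x$. Then $(x,\xi_x)\mapsto\xi_x\cup\{-nx\}$ defines a morphism $\widetilde{\mathcal L}:=\mathcal C\times_A P\to K_n(A)$ on the open locus where $x$ is not $(n+1)$-torsion, birational onto $Z_C$ over the fiber at $C\in B$, with $\dim Z_C=\dim C+\dim Z_x=n$. A suitable compactification then gives the flat projective $p:\mathcal L\to B$ with $q:\mathcal L\to X$. Since the generic fiber of $\widetilde{\mathcal L}\to\mathcal C$ is smooth rational (by the rationality statement of~\cite[Proposition VI.1.1]{Briancon}), $\widetilde{\mathcal L}\to B$ is a rational-fiber bundle over the curve family $\mathcal C\to B$, and a standard fibration computation yields $H^{2,0}(L_C)=0$ (so $Z_C$ is Lagrangian) and $H^0(L_C,\Omega_{L_C})\cong H^0(C,\Omega_C)$.

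For $\clubsuit$, since $A$ is abelian one has $K_A\cong\mathcal O_A$, so adjunction identifies $N_{C/A}\cong K_C=\Omega_C$ via $v\mapsto v\lrcorner\sigma_A|_C$. Hence $T_{B,C}\cong H^0(N_{C/A})\cong H^0(\Omega_C)$, and both sides of $\clubsuit$ have dimension $g(C)$. It then suffices to check that $\lrcorner q^*\sigma_X$ equals, up to a nonzero scalar, the composition $T_{B,C}\to H^0(\Omega_C)\stackrel{\rho^*}{\to}H^0(L_C,\Omega_{L_C})$, with $\rho:L_C\to C$ the natural projection. I would carry this out at a general point $(x,\xi_x)\in\widetilde{\mathcal L}$ using the splitting
\[
T_\xi A^{[n+1]}=T_xA\oplus V_{\xi_x}\oplus T_{-nx}A,\quad \xi=\xi_x\cup\{-nx\},
\]
with $V_{\xi_x}$ tangent to the punctual Hilbert scheme at $x$: the canonical translation lift of $v\in T_{B,C}$ would read $(v(x),0,-nv(x))$, while a vertical tangent vector to $\mathcal L$ reads $(w,\eta,-nw)$ with $w\in T_xC$ and $\eta\in V_{\xi_x}$. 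The computation then reduces, using that $\sigma_{A^{[n]}}$ restricts to $n\sigma_A$ on translation orbits and to $0$ on the punctual Hilbert scheme and that the cross term between these two directions vanishes, to
\[
q^*\sigma_X(\tilde v,(w,\eta))=n\sigma_A(v(x),w)+n^2\sigma_A(v(x),w)=n(n+1)\sigma_A(v(x),w),
\]
independent of $\eta$, giving $v\lrcorner q^*\sigma_X=n(n+1)\rho^*(v\lrcorner\sigma_A|_C)$ and hence $\clubsuit$.

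The technical core will be the tangent-space computation, and in particular the vanishing of the cross term of $\sigma_{A^{[n]}}$ between the translation and punctual directions at a punctual point $\xi_x$. My plan is a Künneth argument: pulling $\sigma_{A^{[n]}}$ back along the translation morphism $A\times\widetilde{\mathrm{Hilb}^n_{\mathrm{pt}}}\to A^{[n]}$, the result sits in $H^0(\Omega^2)$ of the product, whose "mixed" and "punctual" Künneth summands vanish by rationality of $\widetilde{\mathrm{Hilb}^n_{\mathrm{pt}}}$; the pullback is therefore simply the pullback of $n\sigma_A$ from $A$, the coefficient $n$ being determined by degenerating to a reduced configuration.
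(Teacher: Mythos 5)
Your proposal is correct and rests on the same two pillars as the paper's own proof: the rationality of the punctual fibers $Z_x$ (which kills all holomorphic $1$- and $2$-forms in the fiber direction, gives $H^{2,0}(L_C)=0$, and identifies $H^0(L_C,\Omega_{L_C})$ with $H^0(C,\Omega_C)$), and the nondegeneracy of $\sigma_A$ along $C\subset A$. The difference is in how $\clubsuit$ is executed. The paper argues structurally: since the fibers of $\pi: Z\to A$ are rational, $q^*\sigma_{K_n(A)}$ restricted to $Z$ must be $\tilde\pi^*\sigma_A$ (only the Künneth summand coming from $H^{2,0}(A)$ can survive), and then a small commutative diagram reduces $\clubsuit$ to the isomorphism $\lrcorner\sigma_A: H^0(C,N_{C/A})\to H^0(C,\Omega_C)$. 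You instead do a pointwise tangent-space computation yielding the explicit constant $n(n+1)$, with the cross-term vanishing supplied by the same rationality/Künneth argument applied to the translation map into $A^{[n]}$. Your route buys an explicit normalization at the cost of more bookkeeping; both are valid. Two points to tighten: first, the displayed splitting cannot be a decomposition of the full tangent space $T_\xi A^{[n+1]}$ (your right-hand side has dimension $n+3$, while $\dim A^{[n+1]}=2n+2$, so the stated equality fails for $n\geq 2$); it is a decomposition of the tangent space to the stratum of subschemes of the form $\xi_x\cup\{-nx\}$, which is all you actually use since both $\tilde v$ and the vertical vectors lie in it, but it should be stated that way. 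Second, the lift $(v(x),0,-nv(x))$ of $v\in T_{B,C}$ is only well defined modulo vertical vectors; the independence of your pairing from this choice is exactly the vanishing of $\sigma$ on $V_{\xi_x}\times V_{\xi_x}$ and of the cross terms, so it is harmless, but worth making explicit.
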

\begin{prf}
Since for general $C$, $Z_C$ is a fibration over a curve $C$ whose general fibers are rational, any holomorphic $2$-form on $Z_C$ is $0$. Furthermore, $\dim Z_C=n=\dim K_n(A)/2$. These imply that $\{Z_C\}_{C\in B}$ is a Lagrangian family. We now show that this family satisfies condition $\clubsuit$. Denoting $\mathcal L$ the total space of the family $\{Z_C\}_{C\in B}$ and $L$ a general fiber, and using as before the following notation
\[
\begin{tikzcd}
    \mathcal L\arrow{r}{q}\arrow{d}{p} & X\\
    B &
    \end{tikzcd},
    \]
    we need to show that $\lrcorner q^*\sigma_{K_n(A)}: H^0(L,N_{L/Z})=H^0(L, N_{L/\mathcal L})\to H^0(L,\Omega_L)$ is an isomorphism. Since the general fibers of $\pi$ are rational, $q^*\sigma_{K_n(A)}=\tilde\pi^*\sigma_A$, where $\sigma_A$ is the unique (up to coefficients) holomorphic $2$-form on $A$. Therefore, we can conclude by the commutativity of the following diagram
\[\begin{tikzcd}
 H^0(C,N_{C/A})\arrow{r}{\lrcorner\sigma_A}\arrow{d}{\pi^*} & H^0(C,\Omega_C)\arrow{d}{\pi^*}\\
 H^0(L,N_{L/ Z})\arrow{r}{\lrcorner\pi^*\sigma_A}& H^0(L,\Omega_L)
\end{tikzcd}
\]
noting that the two vertical arrows are isomorphims since the fibers of $\pi$ are rational, and that $\lrcorner \sigma_A: H^0(C,N_{C/A})\to H^0(C,\Omega_C)$ is an isomorphism since $\sigma_A$ is nondegenerate.
\end{prf}

\begin{prop}
The Abel-Jacobi map of the Lagrangian family $\{Z_C\}_{C\in B}$ is \emph{not} trivial.
\end{prop}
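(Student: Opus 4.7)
The plan is to apply Proposition~\ref{CriterionIntroduction}: since $\clubsuit$ has just been established for the family $\{Z_C\}_{C\in B}$, the Abel--Jacobi map is nontrivial if and only if, for a general $C\in B$, the restriction $j_C^*\colon H^{2n-1}(K_n(A),\mathbb Q)\to H^{2n-1}(Z_C,\mathbb Q)$ is nonzero. By the remark after Proposition~\ref{Criterion} I may test this on the $(n-1,n)$-Hodge piece; moreover, since $\tilde\pi_C\colon Z_C\to C$ has rational generic fibres, the Leray spectral sequence yields $H^{n-1,n}(Z_C)\cong H^{0,1}(C)$, which is non-zero once $g(C)\geq 1$. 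I shall take $B$ to parametrise smooth curves of genus $g\geq 2$ in $A$ disjoint from the $(n+1)$-torsion $A[n+1]$, a non-empty open condition.

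To produce classes in $H^{n-1,n}(K_n(A))$ I use the correspondence $Z\subset A\times K_n(A)$: passing to a resolution if necessary, the composite
\[
\phi := i_*\circ \pi^* \colon H^1(A,\mathbb Q)\longrightarrow H^{2n-1}(K_n(A),\mathbb Q)
\]
is a morphism of Hodge structures of bidegree $(n-1,n-1)$. The self-intersection formula $i^*i_*= \cup\, c_{n-1}(N_{Z/K_n(A)})$ yields
\[
j_C^*\phi(\omega) = \tilde\pi_C^*(\omega|_C)\cup c_{n-1}(N_{Z/K_n(A)})|_{Z_C}\in H^{n-1,n}(Z_C),
\]
and under the Leray identification $H^{n-1,n}(Z_C)\cong H^{0,1}(C)$ (given by fibre integration $(\tilde\pi_C)_*$) this corresponds to $d\cdot(\omega|_C)$, where
\[
d=\int_{Z_x} c_{n-1}\bigl(N_{Z/K_n(A)}|_{Z_x}\bigr).
\]
Choosing $\omega\in H^{0,1}(A)$ with $\omega|_C\neq 0$ (a generic condition, since $H^1(A,\mathbb Q)\to H^1(C,\mathbb Q)$ is injective as soon as $C$ is not contained in a translate of an elliptic subtorus), the problem reduces to proving $d\neq 0$.

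To compute $d$ I exploit the Lagrangian property of $Z_C$. The symplectic form on $K_n(A)$ gives $N_{Z_C/K_n(A)}\cong \Omega_{Z_C}$, and the normal-bundle sequence
\[
0 \to \tilde\pi_C^*N_{C/A} \to N_{Z_C/K_n(A)} \to N_{Z/K_n(A)}|_{Z_C} \to 0
\]
combined with the Whitney formula and the vanishing of $c_n$ of the rank-$(n-1)$ bundle $N_{Z/K_n(A)}|_{Z_C}$ gives $c_n(\Omega_{Z_C}) = \tilde\pi_C^*c_1(N_{C/A})\cup c_{n-1}(N_{Z/K_n(A)})|_{Z_C}$. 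Integrating over $Z_C$: Gauss--Bonnet contributes $(-1)^n\chi_{\mathrm{top}}(Z_C)$ on the left; adjunction on the abelian surface gives $\int_C c_1(N_{C/A})=2g-2$; smoothness of $\tilde\pi_C$ over $A\setminus A[n+1]$ gives $\chi_{\mathrm{top}}(Z_C)=(2-2g)\chi_{\mathrm{top}}(Z_x)$. These combine to $d=(-1)^{n+1}\chi_{\mathrm{top}}(Z_x)$. Finally, by Brian\c{c}on--Iarrobino, $Z_x\cong \mathrm{Hilb}^n_0(\mathbb A^2)$, whose Euler characteristic is the partition number $p(n)>0$; hence $d\neq 0$.

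The main obstacle is handling the singularities of $Z$ over $A[n+1]$: the self-intersection formula, the normal-bundle sequence, and the Leray isomorphism $H^{n-1,n}(Z_C)\cong H^{0,1}(C)$ must be realised on a smooth projective model of $Z_C$, and one must verify that the excess intersection of $Z_C$ inside $Z$ does not introduce spurious contributions when extending $\phi$ to a resolution. Restricting $B$ to the open locus of curves avoiding the finite set $A[n+1]$ keeps everything smooth over a neighbourhood of each $Z_C$, so these technicalities should be resolvable without altering the above computation.
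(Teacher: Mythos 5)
Your overall strategy -- reduce to the nonvanishing of $j_C^*$ on $H^{2n-1}$ via the criterion, and detect this nonvanishing through the correspondence $Z\subset A\times K_n(A)$ composed with the restriction to a curve $C$ on which $H^1(A,\mathbb Q)\to H^1(C,\mathbb Q)$ is injective -- is exactly the paper's. But you implement the key step differently: the paper shows that $[Z]^*:H^{2n-1}(K_n(A),\mathbb Q)\to H^1(A,\mathbb Q)$ is surjective by realizing $Z$ as a piece of the universal incidence variety $Z'$ over $A\times A\subset A^{(n+1)}$ and invoking the de Cataldo--Migliorini description of $H^*(A^{[n+1]})$, whereas you try to prove the equivalent statement by computing $[Z]^*[Z]_*=d\cdot\mathrm{id}$ on $H^1(A)$ via excess intersection and a Gauss--Bonnet evaluation of $d$. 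If it worked, your route would be self-contained and would give the pleasant identity $d=\pm\,\chi_{\mathrm{top}}(Z_x)=\pm\,p(n)$.

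The gap is that every step of that computation -- the self-intersection formula $i^*i_*=\cup\,c_{n-1}(N_{Z/K_n(A)})$, the normal bundle exact sequence, the identification $N_{Z_C/K_n(A)}\cong\Omega_{Z_C}$, and Gauss--Bonnet $\int_{Z_C}c_n(\Omega_{Z_C})=(-1)^n\chi_{\mathrm{top}}(Z_C)$ -- requires $Z$, $Z_C$ and $Z_x$ to be smooth, and for $n\geq 3$ they are not. You have misdiagnosed the source of the singularities: they do not come only from the $(n+1)$-torsion locus. The fiber $Z_x$ over a \emph{non}-torsion point is isomorphic to the punctual Hilbert scheme $\mathrm{Hilb}^n_0(\mathbb C^2)$, which by Brian\c con is irreducible and rational of dimension $n-1$ but is singular for every $n\geq 3$ (e.g.\ for $n=3$ it is the cone over a twisted cubic). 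So restricting $B$ to curves avoiding $A[n+1]$ does not ``keep everything smooth'': $Z_C$ is singular along a locus lying over all of $C$, the normal sheaf $N_{Z/K_n(A)}$ is not a bundle, and passing to a resolution $\tilde Z\to Z$ destroys the self-intersection formula (the composite $\tilde Z\to K_n(A)$ is no longer an embedding, so $\tilde\imath^*\tilde\imath_*$ is not cup product with a Chern class, and $\chi_{\mathrm{top}}$ of the resolution is no longer $p(n)$). Your argument is complete only for $n=2$, where $Z_x\cong\mathbb P^1$. To handle all $n$ you either need a genuinely singular-compatible version of the degree computation, or you should do what the paper does: work with the correspondence $Z'\subset (A\times A)\times A^{[n+1]}$ on the smooth ambient spaces and quote the surjectivity of $[Z']^*$ on $H^{2n-1}$ from de Cataldo--Migliorini. (A minor additional point: nontriviality of the Abel--Jacobi map from nonvanishing of $j_C^*$ is the converse direction of the criterion, i.e.\ part (b) of Proposition~\ref{Criterion}, which is where the verification of $\clubsuit$ is actually used.)
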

\begin{prf}
Let $i: C\hookrightarrow A$ be a general curve in the family $\mathcal C\to B$. By Proposition~\ref{Criterion}(a), it suffices to show that the restriction map $H^{2n-1}(X,\mathbb Q)\to H^{2n-1}(Z_C,\mathbb Q)$ is nonzero. 

Define an injective morphism
\[\begin{array}{cccc}
     \beta:& A  \times  A & \hookrightarrow & A^{(n+1)}\\
     & (x, y) & \mapsto & n\{x\}+\{y\},
\end{array}
\]
where we use the notation $\{x\}\in \mathcal Z_0(A)$ the $0$-cycle of the point $x\in A$. Consider the following pull-back diagram defining a subvariety $Z'\subset A^{[n+1]} $
\[\begin{tikzcd}
Z'\arrow[r, hookrightarrow, "\alpha"]\arrow{d}{\pi'} &  A^{[n+1]} \arrow{d}{c}\\
A\times A \arrow[r, hookrightarrow, "\beta"] & A^{(n+1)}
\end{tikzcd},
\]
where $c:A^{[n+1]}\to A^{(n+1)}$ is the Hilbert-Chow morphism.
Then $Z = Z'\cap K_n(A)\subset A^{[n+1]}$. We have the following commutative diagram where all three squares are pull-back diagrams
\[
\begin{tikzcd}
Z\arrow[rr, hookrightarrow]\arrow[dd, "\pi"]\arrow[rd, hookrightarrow] && X= K_n(A) \arrow[rd, hookrightarrow] \\
& Z'\arrow[rr, hookrightarrow, "\alpha"]\arrow[dd,"\pi'"] &&  A^{[n+1]} \arrow[dd, "c"]\arrow[rdd, "\mathrm{alb}"]\\
A \arrow[rd, hookrightarrow, "f"]  \\
&A\times A \arrow[rr, hookrightarrow, "\beta"] && A^{(n+1)}\arrow[r, "\sum"] & A
\end{tikzcd},
\]
Here $f: A\to A\times A$ defined by $x\mapsto (x, -nx)$ is the fiber over $0\in A$ of the trivial fibration $\sum\circ\beta: A\times A\to A$. 

By~\cite[Corollary 5.1.5]{deCataldoMigliorini}, $[Z']^*: H^{2n-1}(A^{[n+1]}, \mathbb Q)\to H^1(A\times A, \mathbb Q)$ is surjective. Furthermore, the restriction map $f^*: H^1(A\times A, \mathbb Q)\to H^1(A, \mathbb Q)$ is surjective since $f$ is the fiber of a trivial fibration. These imply that $[Z]^*: H^{2n-1}(X, \mathbb Q)\to H^1(A, \mathbb Q)$ is surjective. Finally, since the restriction map $i^*: H^1(A, \mathbb Q)\to H^1(C, \mathbb Q)$ is injective by Lefschetz hyperplane theorem, the composition map $i^*\circ [Z]^*: H^{2n-1}(X, \mathbb Q)\to H^1(C, \mathbb Q)$ is nonzero. This implies that the restriction map $H^{2n-1}(X, \mathbb Q)\to H^{2n-1}(Z_C, \mathbb Q)$ is nonzero, as desired.
\end{prf}

 ~\newline
 
Sorbonne Université and Université de Paris, CNRS, IMJ-PRG, F-75005 Paris, France.

\textit{Email adress}: \verb|chenyu.bai@imj-prg.fr|.

\end{document}